\documentclass[12pt]{article}
\usepackage{latexsym,amsfonts,amsthm,amsmath,amscd,amssymb}
\usepackage[dvips]{graphicx}

\newtheorem{lemma}{Lemma}[section]
\newtheorem{thm}[lemma]{Theorem}
\newtheorem{rem}[lemma]{Remark}
\newtheorem{prop}[lemma]{Proposition}
\newtheorem{cor}[lemma]{Corollary}

\newtheorem{oss}[lemma]{Observation}

\newtheorem{defn}[lemma]{Definition}

\newcommand{\finedimo}{{\hfill\hbox{$\square$}\vspace{2pt}}}

\renewcommand{\hbar}{{\overline{h}}}

\newfont{\Got}{eufm10 scaled 1200}

\newcommand{\mycap} [1] {\caption{\footnotesize{#1}}}

\newcommand{\Int}{\mathop{\rm Int}\nolimits}

\begin{document}

\title{Generalized Mom-structures and ideal triangulations of
$3$-manifolds with non-spherical boundary}

\author{Ekaterina~{\textsc Pervova}\footnote{Partially supported by
RFFI (proj. 10-01-96035) and by the Program ``Mathematical problems
of algebra, topology, function approximation theory, and
applications'' carried out jointly by the Institute for Mathematics
and Mechanics UrO RAN and the Institute for Mathematics SO RAN.}}

\maketitle

\begin{abstract}
\noindent The so-called Mom-structures on hyperbolic cusped
$3$-manifolds without boundary were introduced by Gabai, Meyerhoff,
and Milley, and used by them to identify the smallest closed
hyperbolic manifold. In this work we extend the notion of a
Mom-structure to include the case of $3$-manifolds with non-empty
boundary that does not have spherical components. We then describe a
certain relation between such generalized Mom-structures, called
\emph{protoMom-structures}, internal on a fixed 3-manifold $N$, and
ideal triangulations of $N$; in addition, in the case of non-closed
hyperbolic manifolds without annular cusps, we describe how an
internal geometric protoMom-structure can be constructed starting
from the Epstein-Penner or Kojima decomposition. Finally, we exhibit
a set of combinatorial moves that relate any two internal
protoMom-structures on a fixed $N$ to each other.

\noindent MSC (2010): 57M20, 57N10 (primary), 57M15, 57M50
(secondary).
\end{abstract}

\section*{Introduction}

This paper is devoted to an extension of the notion of
\emph{Mom-structure}, that is defined for manifolds with toric
boundary, to the case of manifolds with arbitrary non-spherical
non-empty boundary. Mom-structures were introduced by Gabai,
Meyerhoff, and Milley~\cite{Mom1,Mom2} in the context of hyperbolic
$3$-manifolds and served as a tool for them to identify the smallest
closed hyperbolic manifold~\cite{Mom2, Mom3}; we also note that
certain components of a similar notion for compact hyperbolic
manifolds with non-empty totally geodesic boundary were considered
by Kojima-Miyamoto~\cite{KoMi} and
Deblois-Shalen~\cite{DebloisShalen}, still as an instrument for
studying the volume.

A Mom-structure~\cite{Mom1} itself (see Section~\ref{prelim:sec}
below for the precise definition) is a triple of form
$(M,T,\Delta)$, where $M$ is a $3$-manifold with toric boundary, $T$
is a selected torus in $\partial M$, and $\Delta$ is a particular
type of handle decomposition of $M$, that includes a collar of $T$
serving as the ``base'' for the subsequent gluings of $1$- and
$2$-handles. The crucial point is then to consider a fixed
hyperbolic manifold $N$ and the set of Mom-structures
\emph{internal} on $N$ (roughly speaking, the structure is internal
on $N$ if $N$ can be reconstructed from the structure via some Dehn
fillings). The relation with the hyperbolic volume, as suggested
in~\cite{Mom1} and then realized in~\cite{Mom2}, is, very generally,
the following one. Given a complete finite-volume cusped hyperbolic
manifold $N$, consider a horotorus neighbourhood of some cusp. After
expanding it in normal direction, it will eventually encounter
itself, which gives rise to a $1$-handle, while further expansions
create other $1$- and $2$-handles. The fact that for $N$ of low
enough volume this process allows one to construct an internal
Mom-structure on $N$ with at most three $2$-handles (and in fact
other properties) is a crucial result of~\cite{Mom2}.

Considering examples of hyperbolic $3$-manifolds with low Matveev
complexity (available in the census~\cite{www:CP}) that admit an
ideal triangulation into hyperbolic tetrahedra, suggests that
something similar may occur for other types of hyperbolic
$3$-manifolds, and particularly for the ``mixed'' ones (those with
both toric cusps and totally geodesic boundary). This fact served as
a motivation for us to consider a slightly generalized notion of a
Mom-structure, that we call a \emph{protoMom-structure}, and to
study its relations with ideal triangulations; we immediately note
that we do this from a combinatorial point of view rather than from
a geometric one.

The generalization itself is entirely straightforward; essentially,
it consists in replacing $T$ with an arbitrary non-empty surface
$\Sigma$ without spherical components. Our first, and perhaps not
unexpected, result is that every such generalized Mom-structures
internal on a fixed $3$-manifold $N$ with boundary $\Sigma$, can
actually be obtained by thickening some of the arcs and faces of a
suitably chosen ideal triangulation of $N$
(Theorem~\ref{int:protoMom:2:trn:thm}). We emphasize that our proof
of this fact has a constructive nature: how the desired
triangulation can be constructed via a sequence of a small number of
specific moves.

It is the construction just mentioned, rather than just the
existence itself of the triangulation, that enables us to address
the main point of the paper, namely, the determination of a set of
combinatorial moves that relate any two protoMom-structures
(internal on the same manifold) to each other. The definition of the
moves given in Section~\ref{moves:main:sec} is rather natural in
view of Theorem~\ref{int:protoMom:2:trn:thm} and of the easy fact
that an ideal triangulation naturally gives rise to a variety of
internal protoMom-structures (Section~\ref{trn:2:protomom:subsec}).
Indeed, protoMom-structures arise from a triangulation by discarding
some of its faces and by thickening the remaining ones, and by
possibly cancelling some $1$-handles (that are thickenings of the
edges) of valence $1$ with the respective $2$-handles. The way in
which one can change the choice of which faces will be discarded
naturally translates into certain \emph{M-moves}, while the
collapses of $1$-handles translate into so-called \emph{C-moves}. It
is then the content of Theorem~\ref{main:thm} that these moves are
indeed sufficient to relate between them any two protoMom-structures
internal on a fixed manifold.

The structure of the paper is as follows. Section~\ref{prelim:sec}
contains the necessary definitions, while
Section~\ref{trn:e:mom:sec} describes the relations between ideal
triangulations and protoMom-structures.
Section~\ref{mom:subgraph:sec} is devoted to establishing some
preliminary results needed to prove Theorem~\ref{main:thm}, and
Section~\ref{moves:main:sec} contains the definitions of the moves
together with the proof of the main theorem. Throughout the paper we
will employ the piecewise linear viewpoint, which is equivalent to
the smooth one in dimensions $2$ and $3$.

\paragraph{Acknowledgments} This work was carried out while the
author was holding a Junior Visiting position at the \emph{Centro De
Giorgi} of the Scuola Normale Superiore of Pisa, whose staff the
author would like to thank for the excellent working conditions.
Mathematical discussions with other postdocs at the \emph{Centro},
particularly with Pablo Alvarez-Caudevilla, Antoine Lemenant, and
Laura Spinolo, were of indirect, but notable, benefit for the
completion of this paper. The author would also like to thank the
members of the research group in geometry at the Department of
Mathematics of the University of Pisa for their interest, in
particular Roberto Frigerio for the useful suggestions provided and
Carlo Petronio for the very detailed comments on the exposition
style of the first draft of this paper.

\section{Preliminaries and main definitions}\label{prelim:sec}

In this section we recall some known definitions and facts that will
be used in the rest of the paper, and we describe the main object of
our study, which is a generalization of the key definition of
\cite{Mom1} recalled below.

\paragraph{Mom-structures} Given a handle decomposition of some
cobordism, we will call \emph{valence} of a $1$-handle the number of
$2$-handles incident to it (with multiplicity), and \emph{valence}
of a $2$-handle the number of $1$-handles to which it is incident
(with multiplicity). The original definitions due to Gabai,
Meyerhoff, and Milley are as follows.

\begin{defn}\emph{\cite[Definition 0.4]{Mom1}}
A Mom-$n$ structure is a triple $(M,T,\Delta)$ where
\begin{itemize}
  \item $M$ is a compact connected $3$-manifold such that $\partial M$
  is a union of tori;
  \item $T$ is a preferred boundary component of $M$;
  \item $\Delta$ is a decomposition of $M$ obtained as follows:
  \begin{itemize}
    \item take $T\times[0,1]\subset M$ such that $T\times\{0\}=T$;
    \item attach $n$ $1$-handles to $T\times\{1\}$;
    \item add $n$ $2$-handles of valence $3$ in such a way that each $1$-handle has valence at
    least $2$.
  \end{itemize}
\end{itemize}
\end{defn}

\begin{defn}
Let $M$ be a compact connected $3$-manifold, and let
$S\subset\partial M$ be a compact surface, which can be disconnected
or empty. A \emph{general-based handle structure} $\Delta$ on
$(M,S)$ is a decomposition of $M$ obtained in the following way:
 \begin{itemize}
    \item take $S\times[0,1]\subset M$ such that $S\times\{0\}=S$;
    \item add several $0$-handles;
    \item attach finitely many $1$- and $2$-handles to $S\times\{1\}$ and to the
    $0$-handles;
    \item if needed, fill in some spherical boundary components with
    $3$-handles.
 \end{itemize}
\end{defn}

Thus, if $(M,T,\Delta)$ is a Mom-structure then $\Delta$ is a
(particular type of a) general-based handle decomposition on
$(M,T)$.

Given a general-based handle decomposition $\Delta$ on $(M,S)$, we
will call:
\begin{itemize}
  \item \emph{islands} the connected components of the intersection
  of the $1$-handles of $\Delta$ with $\left(S\times\{1\}\right)\cup
\{0-\mbox{handles}\}$;
  \item \emph{bridges} the connected components of the intersection
  of the $2$-handles of $\Delta$ with $\left(S\times\{1\}\right)\cup
\{0-\mbox{handles}\}$;
  \item \emph{lakes} the connected components of the complement in
$\left(S\times\{1\}\right)\cup \{\mbox{boundaries of
}0-\mbox{handles}\}$ of the union of all islands and bridges.
\end{itemize}

\begin{defn}
Let $\Delta$ be a general-based handle structure on $(M,S)$. Then
$\Delta$ is \emph{full} if all the lakes are discs.
\end{defn}

\paragraph{ProtoMom-structures and weak protoMom-structures} The
main object of our study throughout the paper will be the following
specific type of a general-based handle decomposition, that is
obtained by a (rather straightforward) generalization of the notion
of a Mom-structure.

\begin{defn}\label{protoMom:def}
Let $M$ be a compact connected orientable $3$-manifold such that
$\partial M$ can be written as the union of a non-empty surface
$\Sigma$ without spherical components, and some tori. Let $\Delta$
be a general-based handle decomposition on $(M,\Sigma)$ such that
$\Delta$ does not contain $0$- or $3$-handles, each $1$-handle has
valence at least $2$, and each $2$-handle has valence precisely $3$.
Then the triple $(M,\Sigma,\Delta)$ is called a
\emph{protoMom-structure}.
\end{defn}

\begin{rem}
\emph{Let $\Delta$ be a protoMom-structure on $M$ with $\partial M$
the union of a surface $\Sigma$ of the above type and some tori.
Suppose that $\Delta$ contains $n$ $2$-handles; then $\Delta$
contains $n-g+1$ $1$-handles, where $g$ is the genus of $\Sigma$
(or, if $\Sigma$ is disconnected, the sum of the genera of its
connected components).}
\end{rem}

\begin{rem}
\emph{In the case where $\Sigma$ is a torus, a protoMom structure is
simply a Mom structure in the sense of~\cite{Mom1} (see also
\cite{Mom2}).}
\end{rem}

\begin{defn}
Let $\Delta$ be a general-based handle decomposition on $(M,\Sigma)$
with $M$ and $\Sigma$ as in Definition~\ref{protoMom:def}. The
triple $(M,\Sigma,\Delta)$ is called a \emph{weak
protoMom-structure} if $\Delta$ has no $0$- or $3$-handles and all
its $2$-handles have valence $3$ (whereas no restriction is placed
on the valences of $1$-handles).
\end{defn}

\begin{rem}
\emph{Our definition of weakness differs from that given for
Mom-structures in \cite{Mom1}, where a weak Mom-structure is one
where $2$-handles are allowed to have valence $2$ (for us it is
always $3$), while $1$-handles are still required to have valence at
least $2$ (whereas we drop this assumption in the definition of
weakness).}
\end{rem}

\paragraph{Lateral tori and internal protoMom-structures}
Given a (weak) proto-\-Mom-structure $(M,\Sigma,\Delta)$, we call
any toric boundary component of $M$ that is not contained in
$\Sigma$ a \emph{lateral torus} of the given protoMom-structure.
Clearly, if $\Sigma$ does not have toric boundary components, then
any torus in $\partial M$ is lateral.

\begin{defn}
Let $N$ be a $3$-manifold with non-empty boundary, and let
$(M,\Sigma,\Delta)$ be a protoMom structure on a submanifold
$M\subset N$. Then $(M,\Sigma,\Delta)$ is called an \emph{internal
protoMom structure on $N$} if $N\setminus M$ consists of a collar of
$\partial N$ and possibly some solid tori.
\end{defn}
\noindent The meaning of this definition is simply that $N$ can be
obtained from $M$ by Dehn-filling some of the lateral tori of $M$.

Note that in what follows our main focus will mostly be on full weak
protoMom-structures internal on a certain fixed $3$-manifold; we
conclude this section by describing three other notions that will
provide us with the tools necessary to obtain our results, namely
\emph{ideal triangulations}, \emph{special spines} and their
\emph{singular graphs}, and the natural \emph{duality} between the
former two classes of objects.

\paragraph{Ideal triangulations} By an \emph{ideal triangulation}
of a compact $3$-manifold $N$ with boundary we mean a realization of
$N$ as the result of first gluing a finite number of tetrahedra
along a complete system of simplicial pairings of their lateral
faces, and then removing small open neighbourhoods of all the
vertices (such neighbourhoods must be small enough so that their
closures be disjoint). Note that $N$ is thus decomposed into
\emph{truncated tetrahedra} (see Fig.~\ref{duality:fig}-left), not
into actual ones. Observe also that we allow multiple and
self-adjacencies of the tetrahedra.

\paragraph{Special spines} A compact $2$-dimensional polyhedron $P$ is called \emph{special}
if the following two conditions hold. First, the link of each point
should be homeomorphic to one of the following 1-dimensional
polyhedra:
\begin{enumerate}
\item[(a)] a circle;
\item[(b)] a circle with a diameter;
\item[(c)] a circle with three radii.
\end{enumerate}
(See Fig.~\ref{neighborhds_pts_simple:fig} where the corresponding
three possible types of regular neighbourhoods of a point of $P$ are
shown).
    \begin{figure}
    \begin{center}
    \includegraphics[scale=0.5]{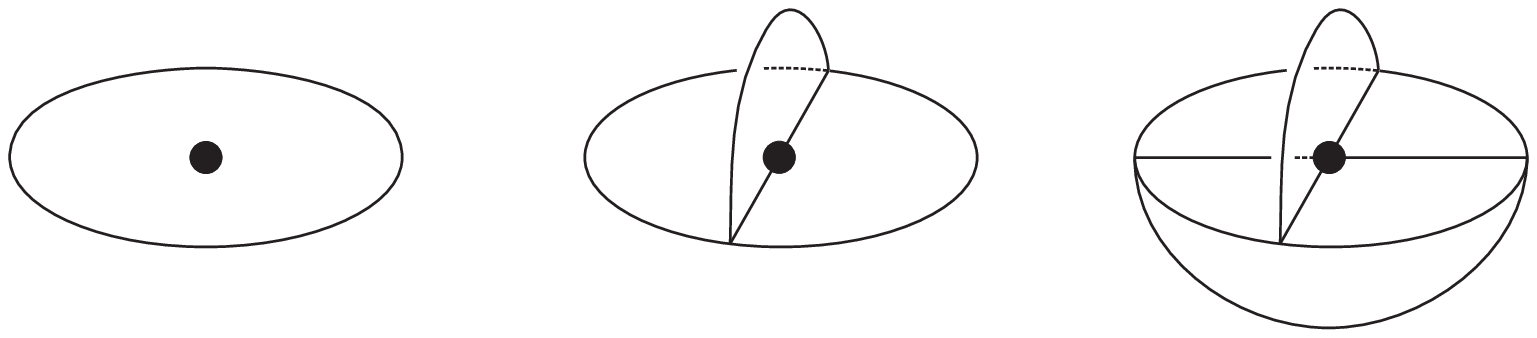}
    \mycap{Neighbourhoods of points in a special polyhedron.}
    \label{neighborhds_pts_simple:fig}
    \end{center}
    \end{figure}
Second, the components of the set of points with link of type (a)
are open discs, while the components of the set of points with link
of type (b) are open segments.

The components just described are called \emph{faces} and
\emph{edges}, respectively, and the points with link of type (c) are
called \emph{vertices}. The points with link of type (b) or (c) are
called \emph{singular}, and the set of all singular points of $P$ is
denoted by $S(P)$, which is a four-valent graph (without circular
components) that we will refer to as the \emph{singular graph} of
$P$.

Let now $N$ be a $3$-manifold with non-empty boundary, and let $P$
be a special polyhedron embedded in $\Int N$. We say that $P$ is a
\emph{special spine} of $N$ if $N\setminus P$ is an open collar of
$\partial N$.

\paragraph{Duality} The final notion that we need is the well-known duality between
ideal triangulations of $3$-manifolds with boundary and their
special spines, summarized in the next statement.

\begin{prop}
Let $N$ be a compact $3$-manifold with non-empty boundary. Then the
set of ideal triangulations of $N$ corresponds bijectively to the
set of special spines of $N$ via the correspondence shown in
Fig.~\ref{duality:fig}.
\end{prop}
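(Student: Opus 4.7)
The statement is a version of classical Matveev duality, and I would prove bijectivity by giving two explicit mutually inverse constructions, then checking that each is well defined. \emph{From triangulation to spine:} Given an ideal triangulation of $N$, inside each truncated tetrahedron I construct a $2$-complex by placing one vertex at the centre, one arc from that vertex transversally through the centre of each of the four hexagonal (non-truncation) faces, and one $2$-cell around each of the six edges of the original tetrahedron, meeting the three arcs that correspond to the hexagons containing that edge. The face pairings of the triangulation identify the opposite hexagons in pairs, joining the arcs into edges and the $2$-cells around equivalent edges of the triangulation into single discs, thus producing a $2$-polyhedron $P\subset\Int N$. To verify that $P$ is special I would check the local models: at an interior point of a $2$-cell the link is a circle (type (a)); at an interior point of an edge three $2$-cells meet, one for each of the three edges of the corresponding hexagon, giving a circle with a diameter (type (b)); at a vertex the four outgoing arcs and the six $2$-cells corresponding to the six edges of the tetrahedron produce a circle with three radii (type (c)). That $N\setminus P$ is an open collar of $\partial N$ follows because the complement of this dual complex inside each truncated tetrahedron deformation retracts onto the four triangular truncation faces, which assemble to a surface isotopic to $\partial N$.

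\emph{From spine to triangulation:} Conversely, given a special spine $P\subset\Int N$, for each vertex $v$ of $P$ I introduce an abstract truncated tetrahedron $T_v$ whose four hexagonal faces correspond to the four germs of edges of $P$ at $v$, equivalently to the four triangles into which the link $K_4$ of $v$ subdivides a small sphere about $v$. Each edge of $P$ joins two such germs and tells us to glue the two corresponding hexagonal faces of the respective $T_v$'s accordingly. The $2$-faces of $P$, being open discs by definition, then correspond to the edges of the resulting triangulation: the boundary of each such disc is a cyclic sequence of edges of $P$, which dictates the cyclic order in which the tetrahedra meet around the dual edge. The union of the truncation triangles fits together into a closed surface $\Sigma'$, and since $N\setminus P$ is an open collar of $\partial N$, this identifies $\Sigma'$ with $\partial N$ and the assembled truncated tetrahedra with $N$.

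\emph{Inverse checks and main obstacle.} The two procedures are mutually inverse essentially by construction, since in both directions the correspondence sends (truncated) tetrahedra to vertices of the spine, triangular faces of the triangulation to edges of the spine, and edges of the triangulation to $2$-faces of the spine, so each construction undoes the other on these combinatorial data. The hardest part, however, is the \emph{topological} matching near the boundary: namely, showing that $N\setminus P$ really is a collar in the first construction, and that the glued truncated tetrahedra really reconstruct $N$ (rather than some homotopy-equivalent but distinct manifold) in the second. Both reduce to the same observation, that $\partial N$ inherits in both pictures the same cell structure -- triangulated by the truncation triangles, each dual to a vertex of $P$, and glued according to the edges of $P$ -- so that the boundary surface together with its collar neighbourhood agrees in the two descriptions and the reconstructions are faithful.
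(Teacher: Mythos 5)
The paper offers no proof of this proposition: it is introduced as ``the well-known duality'' between ideal triangulations and special spines (the classical Casler--Matveev correspondence) and is simply stated with a reference to the figure. So there is nothing in the paper to compare your argument against; what you have written is the standard proof of that classical fact, and it is essentially correct. Two small points. First, an incidence count: each edge of the tetrahedron lies in exactly \emph{two} of the four hexagonal faces, so the $2$-cell of the dual complex assigned to that edge meets two of the four arcs, not three; the correct count is the one you implicitly use afterwards, namely that \emph{three} $2$-cells (one per edge of the triangular face) meet along each arc, which is what yields the circle-with-a-diameter link ($4\times 3=6\times 2=12$ incidences, and the vertex link is $K_4$, homeomorphic to a circle with three radii, as you say). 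Second, for the spine-to-triangulation direction the cleanest way to see that you recover $N$ itself, rather than some other manifold with the same spine, is to note that $N\setminus P$ being an open collar of $\partial N$ makes $N$ a regular neighbourhood of $P$, and a regular neighbourhood of a special polyhedron is canonically assembled from the thickenings of the three local models (Casler's uniqueness of thickenings); your ``matching boundary cell structure'' remark is gesturing at exactly this, and it would be worth saying explicitly.
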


    \begin{figure}
    \begin{center}
    \includegraphics[scale=0.5]{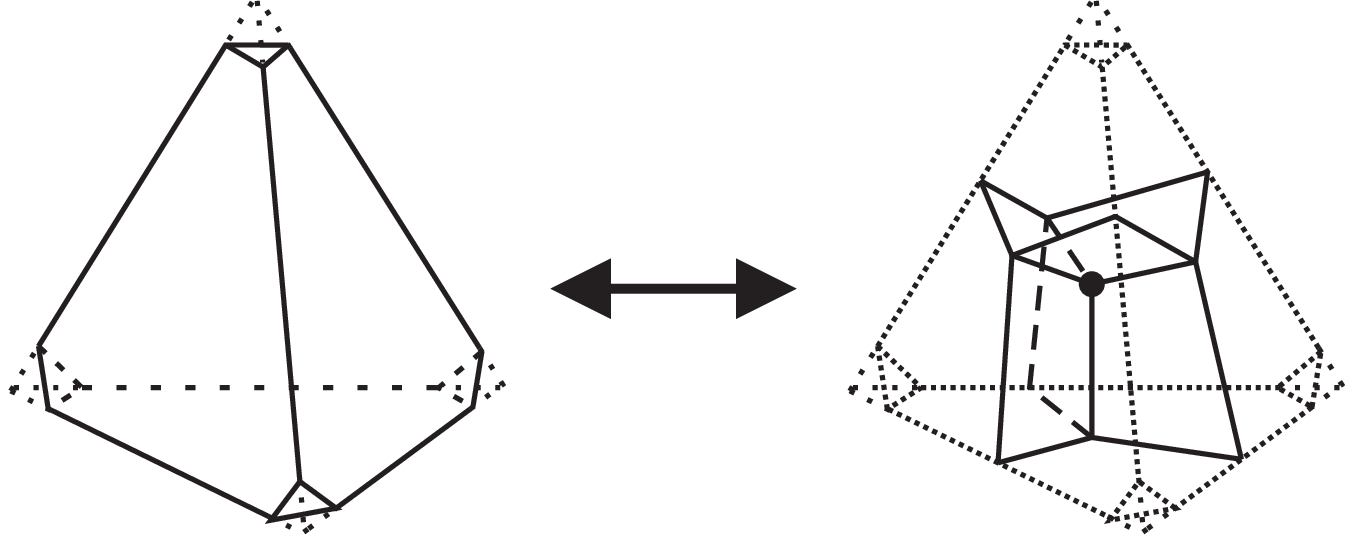}
    \mycap{Duality between ideal triangulations and special polyhedra.}
    \label{duality:fig}
    \end{center}
    \end{figure}

Given an ideal triangulation $\tau$ of $N$, we will denote the
special spine dual to $\tau$ by $P_{\tau}$.

\section{Ideal triangulations and\\ internal
protoMom-structures}\label{trn:e:mom:sec}

In this section we describe a procedure that, given an ideal
triangulation of a manifold $N$ with non-empty boundary (without
spherical components), allows us to obtain an internal weak
protoMom-structure on $N$. We then give a slightly more general
notion of a so-called \emph{triangulation-induced}
protoMom-structure (that we usually call $\tau$-induced, where
$\tau$ stands for a specific triangulation), and we show that all
full weak protoMom-structures internal on a given $N$ are actually
induced by some ideal triangulation of $N$. The proof of this fact
is constructive, and will be used in Section~\ref{moves:main:sec} to
study combinatorial moves relating different protoMom-structures
internal on the same fixed manifold $N$.

\subsection{Triangulation-induced protoMom-structures}\label{trn:2:protomom:subsec}

Fix a compact connected orientable $3$-manifold $N$ such that
$\partial N=\Sigma$ is a non-empty surface without spherical
components. Let $\tau$ be an arbitrary ideal triangulation of $N$,
seen as a decomposition of $N$ into truncated tetrahedra. Then
$\tau$ gives rise to a number of internal protoMom-structures on
$N$, which are constructed in the following way.

\paragraph{From an ideal triangulation to a protoMom-structure: constructive approach}
Consider the $3$-manifold $M'$ obtained by removing from each
tetrahedron of $\tau$ a small open ball around the centre of the
tetrahedron. The triangulation $\tau$ naturally induces on $M'$ a
general-based handle decomposition $\Delta'$ obtained by taking the
collar of $\Sigma$ and thickening each edge of $\tau$ to a
$1$-handle and each face to a $2$-handle.

We can construct a (non-unique) weak protoMom-structure
$(M,\Sigma,\Delta)$ with $M\subset M'$ and $\Delta$ induced by
$\Delta'$ by starting from $\Delta'$ and repeatedly deleting
$2$-handles $\alpha$ satisfying one of the following conditions:
\begin{enumerate}
    \item[(a)] the ends of the cocore of $\alpha$ lie on two distinct spherical
    components of the boundary, or
    \item[(b)] one end of the cocore of $\alpha$ lies on a spherical component and the
    other one on a toric component, or
    \item[(c)] both ends of the cocore of $\alpha$ lie on the same spherical
    component.
  \end{enumerate}
We do this until a handle decomposition of a manifold bounded by
$\Sigma$ and some tori is reached.

Denote the manifold obtained as a result by $M$ and the subset of
the handles of $\Delta'$ that are contained in $M$ by $\Delta$. Then
it is evident that $(M,\Sigma,\Delta)$ thus obtained is a weak
protoMom-structure internal on $N$.

\begin{rem}
\emph{Observe that the procedure described above involves the
minimal possible number of handle deletions required to obtain an
internal protoMom-structure composed of handles belonging to
$\Delta'$ (this fact will be given a precise formulation in
Section~\ref{moves:main:sec}, see Definition~\ref{trn:max:str:def}
and Lemma~\ref{max:e:mom:lem}). Note however that canceling a
$1$-handle of valence $1$ with the $2$-handle incident to it
transforms a weak protoMom-structure into a weak protoMom-structure,
so performing such cancelations intermittently with the above
operation allows us to obtain a much wider class of
protoMom-structures from a given triangulation. An \emph{a priori}
still wider class of protoMom-structures could be obtained by
allowing the removal of $1$-handles of valence $0$ intermittently
with the removal of $2$-handles, but we will not consider this
latter procedure in the paper.}
\end{rem}

\begin{rem}\label{obtain:genuine:rem}
\emph{It will be an easy consequence of Lemma~\ref{proto:2:max:lem}
below that all ``genuine'' (\emph{i.e.}, non-weak) full
protoMom-structures can be obtained from some triangulation by first
removing $2$-handles via the operation described and then
successively canceling $1$-handles of valence $1$ until none such is
left.}
\end{rem}

We now give a formal definition of a notion already referred to
above.

\begin{defn}\label{trn:induced:def}
Let $N$, with $\partial N=\Sigma$, $\tau$, and $\Delta'$ be as
above. A weak protoMom-structure $(M,\Sigma,\Delta)$ internal on $N$
is said to be \emph{$\tau$-induced} if the set of handles of
$\Delta$ is a subset of that of $\Delta'$.
\end{defn}

Note that, while any protoMom-structure obtained as described above
is of course $\tau$-induced, this notion is \emph{a priori} more
general. Indeed, suppose that we have a $\tau$-induced
protoMom-structure $(M,\Sigma,\Delta)$ containing a subset of $1$-
and $2$-handles such that: 1) the subset forms an annular sheet in
the sense of~\cite{Mom1}; 2) the lateral boundary of this sheet
consists of two annuli belonging to two different lateral boundary
components of the ambient manifold $M$; 3) each such lateral annulus
is non-trivial in the respective lateral torus. Then removing such
sheet still yields a $\tau$-induced protoMom-structure, which may or
may not be obtainable via a sequence of the operations described
above.

\paragraph{Geometric protoMom-structures and canonical decompositions}
We briefly observe that a certain adjustment of the above
constructive procedure allows to obtain a number of weak
protoMom-structures internal on non-closed hyperbolic $3$-manifolds
without annular cusps and geometric in the sense of~\cite{Mom2,Mom4}
(\emph{i.e.}, such that the cores of all the $1$- and the
$2$-handles are respectively geodesic arcs and geodesic hexagons).
The adjustment is as follows. We consider, as appropriate, either
the Epstein-Penner~\cite{EpPe} or the Kojima
decomposition~\cite{koji1,koji2} of $N$, and we subdivide each face
of each polyhedron into triangles by geodesic arcs. Then we
construct $\Delta'$ by taking thickenings of the edges of the
polyhedra and of the added arcs as $1$-handles, and the thickenings
of the triangles as $2$-handles ($M'$ is then the ambient space of
$\Delta'$), and we apply precisely the same procedure as above (with
or without the collapses) to obtain a geometric weak
protoMom-structure from~$\Delta'$.

\subsection{From an internal protoMom-structure\\ to an ideal
triangulation}\label{mom:2:trn:subsec}

In this section we will establish the following result.

\begin{thm}\label{int:protoMom:2:trn:thm}
Let $(M,\Sigma,\Delta)$ be a full weak protoMom-structure internal
on a compact connected orientable $3$-manifold $N$ with $\partial
N=\Sigma$. Then there exists an ideal triangulation $\tau$ of $N$
such that $(M,\Sigma,\Delta)$ is $\tau$-induced.
\end{thm}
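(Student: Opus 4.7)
The strategy is to exploit the duality between ideal triangulations and special spines: we aim to exhibit a special spine $P$ of $N$, and within it a distinguished subset $\mathcal{F}$ of faces and a distinguished subset $\mathcal{E}$ of edges, such that the thickenings of the $\tau$-edges dual to $\mathcal{F}$ yield precisely the $1$-handles of $\Delta$ and the thickenings of the $\tau$-faces dual to $\mathcal{E}$ yield precisely the $2$-handles of $\Delta$. If we succeed, then $\Delta$ will be a sub-handle-structure of $\Delta'$, hence $\tau$-induced.

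The first step is to produce a $2$-subcomplex $Q\subset M$, consisting of the cocore discs of the $1$-handles of $\Delta$ together with suitable arcs ``dual'' to the $2$-handles of $\Delta$. These arcs should be chosen so that each one runs through a single $2$-handle and terminates on the cocore discs of the three incident $1$-handles, which is possible precisely because every $2$-handle of $\Delta$ has valence~$3$. The discs will become the faces in $\mathcal{F}$, the arcs the edges in $\mathcal{E}$, and by construction each edge of $\mathcal{E}$ will already lie on the boundary of exactly three faces of $\mathcal{F}$, as required for specialness.

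The second step is to extend $Q$ across $N\setminus M$, which by the internality assumption splits as a collar of $\Sigma$ (to which nothing has to be added, since the spine will lie on the opposite side of $\Sigma$) and finitely many solid tori $V_1,\ldots,V_m$ attached to $M$ along lateral tori $T_1,\ldots,T_m$. For each $V_i$ one adjoins a meridional disc $D_i\subset V_i$ (which will contribute a new face of $P$, hence a new edge of $\tau$ not belonging to $\Delta$), together with whatever auxiliary arcs on $T_i$ are needed to connect $\partial D_i$ into the singular graph of $Q$. Here the hypothesis of fullness plays a crucial role: since the lakes of $\Delta$ are discs, the induced cell decomposition on $T_i$ (tops of $1$-handle annuli, tops of $2$-handle rectangles, tops of lake-discs) is rigid enough that one can pick a meridian of $V_i$ transverse to it and glue the disc $D_i$ coherently.

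The final, and technically most delicate, step is to make the candidate polyhedron $P$ genuinely special (disc faces, exactly three faces at each edge, link of type~(c) at every singular vertex) and to check that the induced handle decomposition restricts to $\Delta$ on $M$. Whenever $P$ fails to be special one inserts auxiliary arcs and discs to subdivide offending faces or split offending singular vertices; these operations translate into additional edges and faces of $\tau$ which enlarge $\Delta'$ beyond $\Delta$ but leave $\Delta$ itself untouched. The hard part, and what I expect to be the main obstacle, is precisely guaranteeing that all such modifications can be carried out in the complement of $Q$: only then are the handles of $\Delta$ preserved as the thickenings of the cells of $\tau$ dual to $\mathcal{F}\cup\mathcal{E}$, which is exactly what the $\tau$-induced property demands.
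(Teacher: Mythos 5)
Your reduction of the problem to the complement $N\setminus M$ --- a collar of $\Sigma$ plus solid tori $V_1,\ldots,V_m$ attached along the lateral tori --- agrees with the paper's, except that you phrase everything dually in terms of special spines while the paper works directly with triangulations. The trouble is that the proposal stops exactly where the proof has to begin. Your treatment of the solid tori (``adjoin a meridional disc $D_i$ together with whatever auxiliary arcs are needed'') and your final step (``whenever $P$ fails to be special one inserts auxiliary arcs and discs'', which you yourself flag as the main obstacle) are not arguments; together they constitute essentially the entire content of the theorem. Moreover, the specific picture of one meridional disc per $V_i$ is misleading: dually, what is required is a triangulation of the solid torus $H_i$ that restricts to the prescribed triangulation $\delta'(T_i)$ on $\partial H_i$ and has \emph{no interior vertices} (otherwise $\tau$ fails to be ideal), and the number of tetrahedra this takes is not bounded --- it grows with the combinatorial size of $\delta'(T_i)$ and with the slope of the meridian relative to it, as the layered triangulations already show. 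Your sketch never addresses the ideality constraint at all.

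What the paper actually supplies at this point is the following. Fullness guarantees there are no valence-$0$ $1$-handles and that the lakes are discs, so the lake/strip/hexagon decomposition of each lateral torus collapses to an honest triangulation $\delta'(T_i)$ whose vertices (the contracted lakes) lie on $\Sigma$. Lemma~\ref{trn:tor:2:tor:prelim:lem} then shows that \emph{any} triangulation of the torus can be reduced to the standard two-triangle one by the moves (s1), (s2), (s3) --- this is a genuine induction with several delicate cases (loops, low-valence vertices, coinciding endpoints) --- and Lemma~\ref{trn:tor:2:tor:lem} realizes each such move by gluing a triangle or a tetrahedron \emph{inside} $H_i$, capping off with a layered triangulation via Lemma~\ref{exist:layered:lem}. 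This produces the required triangulation of $H_i$ with $\tau_i^{(0)}=\delta'(T_i)^{(0)}$, and assembling the $\tau_i$ with the collapsed handle structure of $\Delta$ gives the ideal triangulation $\tau$ with $(M,\Sigma,\Delta)$ manifestly $\tau$-induced. To turn your proposal into a proof you would have to supply the spine-dual analogue of these two lemmas; that is where all the work lies, and it is precisely the part you have deferred.
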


To prove this theorem, we will need several auxiliary tools,
including one well-known construction described in the next
paragraph.

\paragraph{Layered triangulations} Given a 3-manifold $M'$ with a
toric boundary component $T$ triangulated into two triangles, a
\emph{layering along an edge $e$} (see \cite{layered} and the
references therein) of this triangulation consists in gluing a
standard tetrahedron $\Delta_3$ to $T$ via a simplicial
homeomorphism between $\alpha\cup\beta$, where $\alpha$ and $\beta$
are the two triangular faces in the triangulation of $T$ (and are
therefore adjacent to $e$), and two faces $\alpha'$ and $\beta'$ in
the boundary of $\Delta_3$, such that the common edge
$\alpha'\cap\beta'$ is glued to $e$. Clearly, this operation changes
the triangulation of $T\subset\partial M'$ but not the manifold $M'$
itself; we will need it to describe the so-called \emph{layered
triangulations} of solid tori.

A layered triangulation of a solid torus $H$ is any triangulation of
$H$ obtained by taking the one-tetrahedron triangulation of $H$
shown in Fig.~\ref{trn:solid:tor:easy:fig}-left
    \begin{figure}
    \begin{center}
    \includegraphics[scale=0.5]{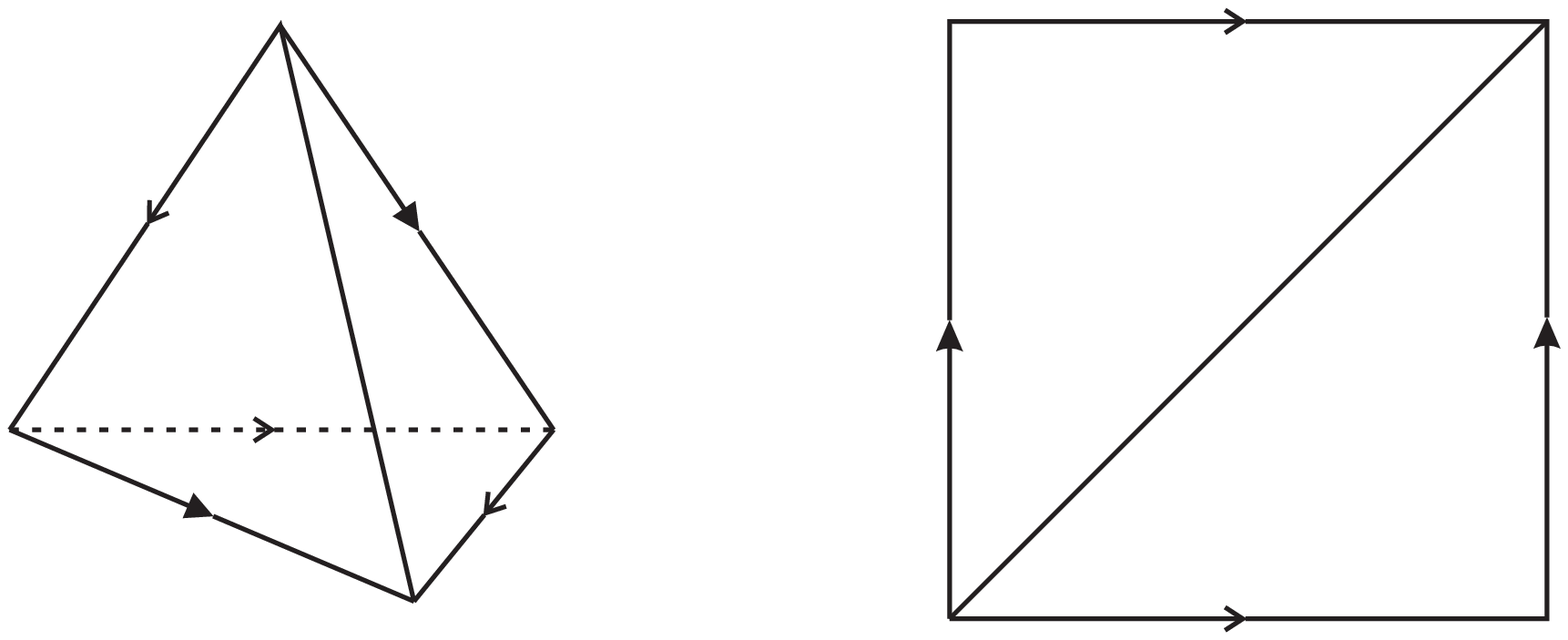}
    \mycap{Left: one-tetrahedron triangulation of a solid torus. The two back faces
    are glued together according to the labels of the edges. Right: the resulting
    triangulation of the boundary torus.}
    \label{trn:solid:tor:easy:fig}
    \end{center}
    \end{figure}
and then performing several successive operations of layering along
edges in $\partial H$. If $\tau$ is a layered triangulation of $H$
then its restriction to the boundary of $H$ is always of the form
shown in Fig.~\ref{trn:solid:tor:easy:fig}-right; the operation of
layering changes the position of this triangulation with respect to
the meridinal disc of $H$. More precisely, the set of triangulations
of $\partial H$ as in Fig.~\ref{trn:solid:tor:easy:fig}-right is in
bijective correspondence with the set of isotopy classes of
embeddings of the $\theta$-curve in $\partial H$ having a disc as
the complement, where the bijection is by taking the dual graph to
the $1$-skeleton of the triangulation, as in
Fig.~\ref{trn2:dual:theta:fig}.
    \begin{figure}
    \begin{center}
    \includegraphics[scale=0.3]{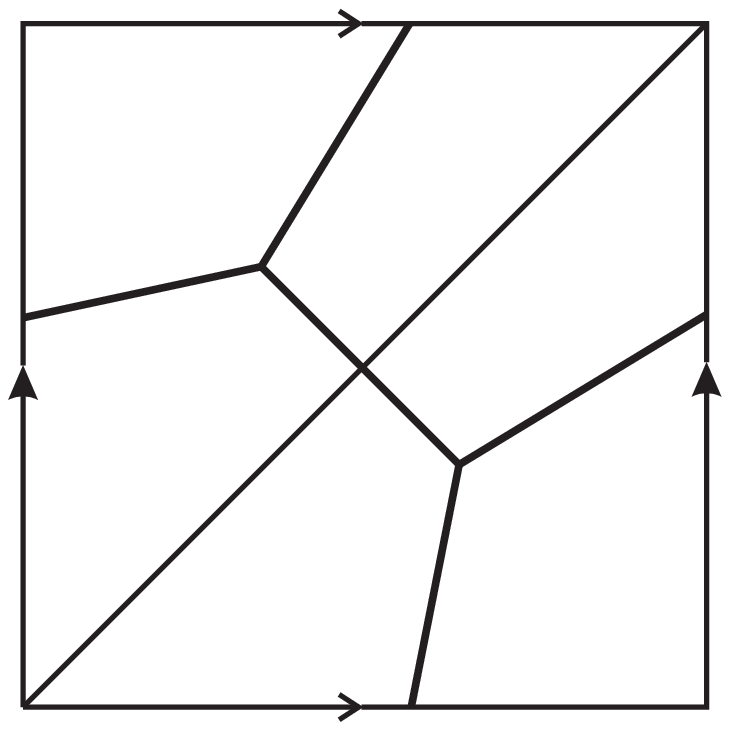}
    \mycap{The dual $\theta$-curve.}
    \label{trn2:dual:theta:fig}
    \end{center}
    \end{figure}

To proceed, we will need the following known result.

\begin{lemma}\label{exist:layered:lem}\emph{(\cite{layered})}
For every $\theta$-curve in the boundary of a solid torus $H$ having
a disc as the complement there exists a layered triangulation of $H$
such that the $1$-skeleton of its restriction to $\partial H$ is
dual to the $\theta$-curve.
\end{lemma}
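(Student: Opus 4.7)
The plan is to leverage the classical identification of one-vertex triangulations of a torus with triangles in the Farey tessellation of the hyperbolic plane $\mathbb{H}^2$, and to realize layering moves as flips between adjacent Farey triangles. By the duality of Fig.~\ref{trn2:dual:theta:fig}, a $\theta$-curve in $\partial H$ with disc complement is the same datum as a one-vertex triangulation of the torus $\partial H$ having three edges and two triangles, so it suffices to show that every such triangulation of $\partial H$ arises as the boundary restriction of some layered triangulation of $H$.

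First I would establish the parameterization. Fix an identification $H_1(\partial H;\matZ)\cong\matZ^2$ and send each edge of a one-vertex triangulation of $\partial H$ to its primitive slope vector. The one-vertex condition translates to the requirement that each pair of slopes has geometric intersection number~$1$, equivalently that the corresponding pair of primitive vectors forms a basis of $\matZ^2$. Such triples of primitive vectors are precisely the vertex-sets of the Farey triangles in $\mathbb{H}^2$, giving a bijection between isotopy classes of $\theta$-curves with disc complement in $\partial H$ and Farey triangles.

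Next comes the key computation: when one layers along an edge $e$ of the current boundary triangulation, the definition of a layering tetrahedron shows that the two triangles adjacent to $e$ are replaced by the two opposite triangles of the new tetrahedron, which share the opposite edge $e'$. In slope terms, if the two triangles had edges of slopes $\alpha,\beta,\gamma$ with $\gamma$ the slope of $e$, then the new edge $e'$ has slope $\alpha\pm\beta$ (the Farey mediant of the two preserved slopes). Geometrically this is exactly the flip to the Farey triangle sharing with the current one the edge opposite to $\gamma$, so each layering corresponds to a single edge-move in the dual graph of the Farey tessellation.

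Finally, the one-tetrahedron triangulation of Fig.~\ref{trn:solid:tor:easy:fig}-left induces a definite starting Farey triangle $F_0$, and the dual graph of the Farey tessellation is a connected trivalent tree; hence any target Farey triangle $F$ is reachable from $F_0$ by a finite sequence of edge-flips, which we realize concretely as a sequence of layerings. Concatenating the corresponding layered tetrahedra produces a layered triangulation of $H$ whose boundary $1$-skeleton is dual to the prescribed $\theta$-curve. The main subtlety — and the step where the technical work sits — lies in the middle paragraph, namely in checking rigorously that the slope computation attributing $\alpha\pm\beta$ to $e'$ is correct (with the correct sign conventions coming from the gluing pattern of Fig.~\ref{trn:solid:tor:easy:fig}-left) and that the starting Farey triangle $F_0$ is genuinely identified with the boundary triangulation induced by the base one-tetrahedron solid torus; once these compatibilities are pinned down, the connectedness of the Farey tree delivers the conclusion immediately.
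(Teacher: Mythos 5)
The paper offers no proof of this lemma: it is quoted as a known result from the reference \cite{layered} (Jaco--Rubinstein--Tillmann), so there is nothing internal to compare against. Your argument is the standard proof of that quoted result and is correct: one-vertex triangulations of the torus correspond to Farey triangles, a layering along the boundary edge of slope $\gamma$ in the triangle $\{\alpha,\beta,\gamma\}$ performs the flip to the unique adjacent Farey triangle across the edge $\{\alpha,\beta\}$ (replacing $\gamma$ by the other mediant of $\alpha$ and $\beta$), and since all three neighbours of a given Farey triangle are reachable by choosing which boundary edge to layer along, connectedness of the trivalent dual tree of the Farey tessellation gets you from the triangle of the one-tetrahedron solid torus to any prescribed target. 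The only points needing care are exactly the ones you flag (the sign convention in the mediant computation and the identification of the base triangle $F_0$), plus the small observation that a one-vertex triangulation of the torus is determined up to isotopy by its unordered triple of edge slopes; none of these is a gap.
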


\paragraph{Induced triangulations of lateral tori} Observe that a full
weak protoMom-structure cannot contain $1$-handles of valence $0$.
Hence, each lateral torus $T_i$ of the ambient manifold $M$ of such
a protoMom-structure admits a natural decomposition into disks of
the following 3 types:
\begin{itemize}
  \item lakes (contained in $\Sigma\times\{1\}$);
  \item strips of the form $\ell\times[0,1]$ with $\ell\subset\partial
  D^2$ for some $1$-handle $D^2\times[0,1]$;
  \item hexagons that are lateral sides of $2$-handles.
\end{itemize}
This decomposition induces a natural triangulation of $T_i$ obtained
by compressing each lake to a point and each strip $\ell\times[0,1]$
to an arc $\{*\}\times[0,1]$. We denote this triangulation by
$\delta'(T_i)$.

\paragraph{Moves on triangulations of surfaces} We now introduce several
moves on triangulations of surfaces, that will be used to prove
Theorem~\ref{int:protoMom:2:trn:thm}. Let $T$ be a closed surface
endowed with a triangulation $\delta$.
\begin{enumerate}
  \item[(s1)] Let $v$ be a vertex of $\delta$, and let $e'$, $e''$ be
  two distinct edges incident to $v$. Cut $T$ open along $e_1\cup v\cup e_2$
  and fill the resulting square by two triangles sharing an edge
  in the original position of $e_1\cup v\cup e_2$. We say that the
  move (s1) \emph{is performed at $v$ along $e',e''$.}

  \item[(s2)] Let $e$ be an edge of $\delta$ belonging to two distinct
  triangles. Remove $e$ and replace it by the other diagonal $e'$ of the
  resulting square (``flipping'' $e$). We say that the move (s2)
  \emph{is performed at $e$}.

  \item[(s3)] Let $v$ be a vertex of $\delta$ incident to exactly
  $3$ distinct triangles. Merge the three triangles into a single
  one, removing $v$ and the edges incident to it.
\end{enumerate}
The moves (s1-s3) are shown in Fig.~\ref{moves:triang:fig}.
    \begin{figure}
    \begin{center}
    \includegraphics[scale=0.5]{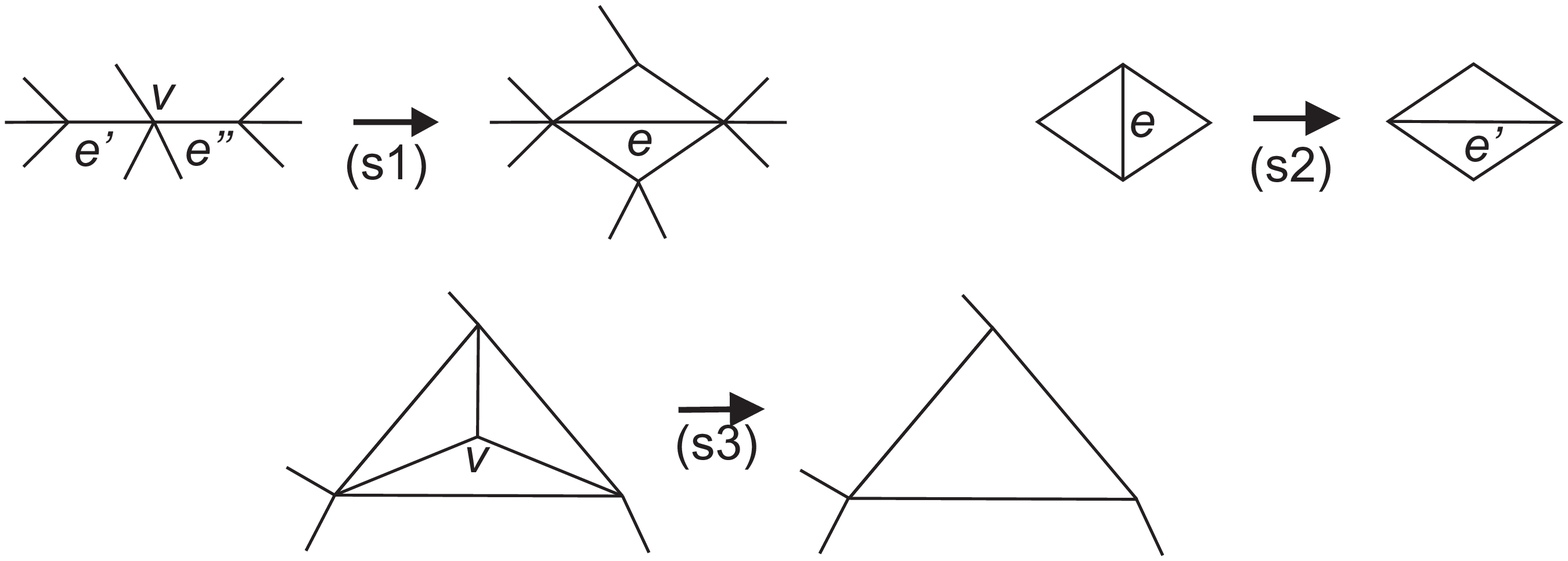}
    \mycap{The moves (s1), (s2), (s3).}
    \label{moves:triang:fig}
    \end{center}
    \end{figure}

We will most often apply the move (s1) at a vertex $v'$ along two
edges $e',e''$ such that $e'$ is incident to a vertex $v$ of valence
$1$, see Fig.~\ref{move:s1:sp:case:fig}.
    \begin{figure}
    \begin{center}
    \includegraphics[scale=0.5]{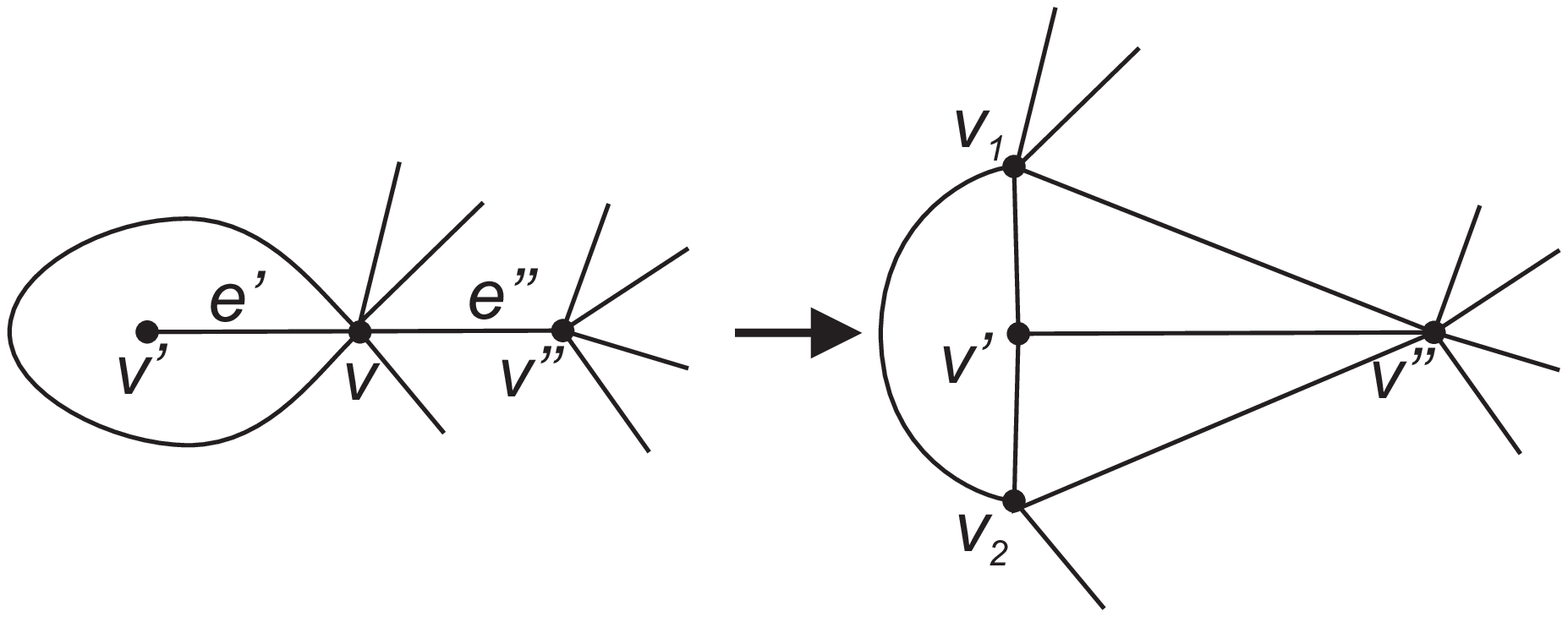}
    \mycap{A special case of move (s1).}
    \label{move:s1:sp:case:fig}
    \end{center}
    \end{figure}
Observe that the move destroys $v$ without creating vertices of
valence $1$ or $2$.

Below we will often use the following composition of moves.

\begin{enumerate}
  \item[(s1$'$)] Let $v$ be a vertex of $\delta$ of valence $2$. Denote the two
  edges incident to $v$ by $e'$ and $e''$. The move (s1$'$) consists in
  performing the move (s1) at $v$ along $e',e''$ and subsequently
  performing the move (s2) along the newly created edge~$e$.
\end{enumerate}
The effect of this move on the triangulation is shown in
Fig.~\ref{move:val2:fig}.
    \begin{figure}
    \begin{center}
    \includegraphics[scale=0.4]{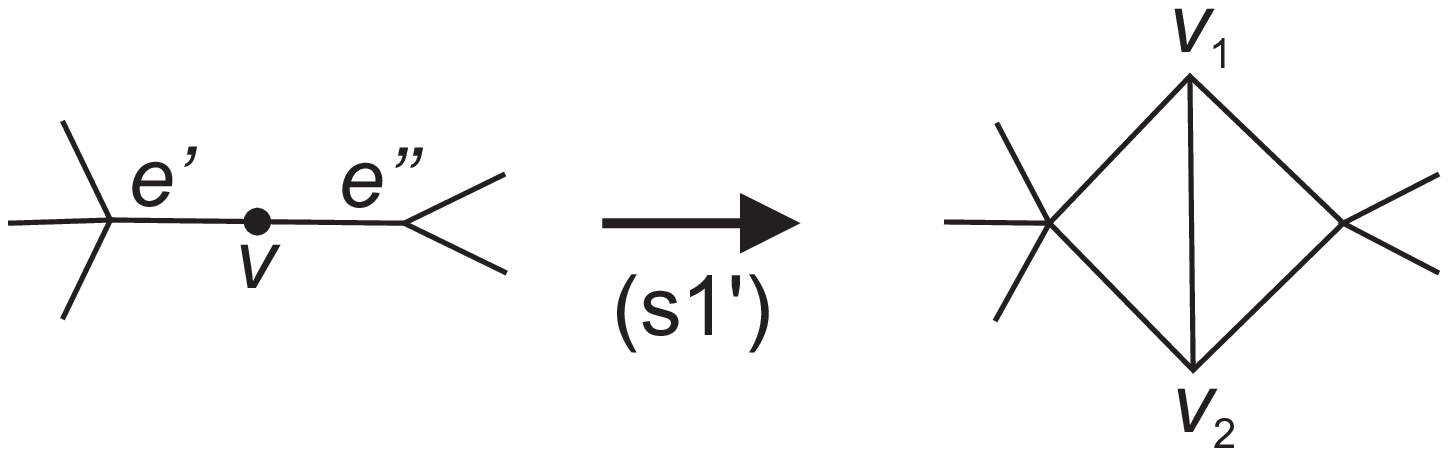}
    \mycap{The move (s1').}
    \label{move:val2:fig}
    \end{center}
    \end{figure}
Observe that the move decreases the number of vertices of valence
$2$ by one, and does not create any vertices of valence $1$.

\paragraph{Simplifying a triangulation} The key ingredient in the
proof of Theorem~\ref{int:protoMom:2:trn:thm} will be the following
fact.

\begin{lemma}\label{trn:tor:2:tor:prelim:lem}
Let $\delta$ be a triangulation of a two-dimensional torus. Then
there exists a sequence of moves (s1), (s2), (s3) transforming
$\delta$ into a triangulation with exactly two triangles.
\end{lemma}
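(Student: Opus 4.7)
The plan is to proceed by induction on the number $F$ of triangles of $\delta$, showing that whenever $F>2$ we can exhibit a finite sequence of the moves (s1), (s2), (s3) that strictly decreases $F$. The base case $F=2$ is trivial, and note that the Euler relation $V-E+F=0$ combined with $2E=3F$ forces $V=F/2$, so $F$ is necessarily even and the first non-trivial case to consider is $F=4$.

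The main workhorse of the reduction is the move (s3), which at a vertex incident to exactly three pairwise distinct triangles reduces $F$ by $2$, while (s1) and (s1') will be used purely as auxiliary clean-up. Concretely, I would first apply the special case of (s1) illustrated in Fig.~\ref{move:s1:sp:case:fig} to destroy any vertex of valence~$1$, and then apply (s1') to destroy any vertex of valence~$2$; the observations accompanying Fig.~\ref{move:s1:sp:case:fig} and Fig.~\ref{move:val2:fig} guarantee that no new vertices of valence at most $2$ are produced, so this normalization phase terminates and yields a triangulation in which every vertex has valence at least~$3$.

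The crux is then to manufacture at some vertex $v$ of the normalized triangulation the configuration required by (s3). Since $V=F/2\geq 2$ and the sum of the vertex valences equals $2E=6V$, the average valence is exactly $6$, so there is certainly a vertex $v$ of valence at most~$6$. If such a $v$ already has valence~$3$ and its three incident triangles are pairwise distinct, I apply (s3) and conclude by induction. Otherwise I pick an edge $e$ at $v$ whose two adjacent triangles are distinct and flip $e$ via (s2); this drops the valence of $v$ (and of the other endpoint of $e$) by one, while possibly creating new vertices of valence $1$ or~$2$ at the opposite corners of the flipped quadrilateral, which can be removed by another round of (s1) and (s1'). Iterating flips of this kind, interleaved with clean-up, drives the valence of $v$ down to~$3$, after which (s3) becomes applicable.

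The hard part will be guaranteeing at each stage that an admissible flip is available and that, once $v$ reaches valence~$3$, the three triangles of its star are indeed pairwise distinct --- configurations such as a loop edge based at $v$, a triangle with two of its edges identified, or two triangles in the star of $v$ sharing more than one edge must be recognized and broken. Here I would carry out a finite local case analysis of the possible degeneracies of the link of $v$ and show that in each case a preliminary flip (s2) applied to a carefully chosen edge in the star or link of $v$ restores a generic picture. The fact that $T$ is a torus, rather than for instance a sphere, provides the global ``room'' that ensures such an auxiliary flip always exists and that the whole reduction scheme terminates at the minimal two-triangle triangulation.
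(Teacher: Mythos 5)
Your overall strategy (normalize away valence $1$ and $2$, then use flips to set up the merging move (s3)) is the same as the paper's, but there are two genuine gaps. First, your induction measure is broken: you induct on the number $F$ of triangles, yet your reduction step interleaves flips with ``another round of (s1) and (s1$'$)'' whenever low-valence vertices appear, and the move (s1) (hence also (s1$'$)) \emph{increases} $F$ by $2$ (it cuts open along two edges and fills the slit with two new triangles, creating a new vertex). So the quantity you are inducting on does not decrease monotonically, and termination is not established. The paper avoids this by inducting on the number of \emph{vertices} after the normalization phase and by using only (s2) and (s3) from that point on, which requires proving --- and this is where most of the work lies --- that the flips can always be chosen so that no vertex of valence $1$ or $2$ is ever created (see the careful bookkeeping of valences in Cases 3.1.a, 3.1.b and the $v_2=v_k$ analysis in Case 3.2.a). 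You would need either to supply that same analysis or to replace $F$ by a measure that is genuinely monotone under your mixed sequence of moves.

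Second, the ``hard part'' you defer to ``a finite local case analysis'' is not a routine local matter. Your reduction assumes you can always find at $v$ an edge whose two adjacent triangles are distinct and whose flip lowers the valence of $v$ by one; this fails, for instance, when the edge is a loop at $v$ (both endpoints are $v$, so its valence drops by two) and, more seriously, in the configuration where \emph{every} vertex is incident to a loop. In that situation (the paper's Case 3.2.b) the triangulation has a rigid global structure --- a chain of parallel essential loops --- that cannot be detected or broken by inspecting the link of a single vertex; the paper identifies this structure explicitly and disposes of it by a separate induction on the number of parallel loops. Until you handle this family (and verify that the three triangles around your valence-$3$ vertex are pairwise distinct before applying (s3)), the argument is incomplete precisely where the real content of the lemma sits.
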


The desired triangulation is the one shown in
Fig.~\ref{trn:solid:tor:easy:fig}-right.

\begin{proof}
The strategy of the proof is to use first the moves (s*) to get rid
of all vertices of valence $1$ and $2$; note that, since we will use
the move (s1$'$), this may increase the total number of vertices. We
then show that a triangulation where all vertices have valence at
least $3$, can be transformed into the desired one by moves (s2) and
(s3), and the argument is based on the fact that the move (s3)
strictly decreases the number of vertices.\vspace{4mm}

\noindent (1) Assume first that $\delta$ contains a vertex $v$ of
valence $1$. Then it is easy to see that we have a situation as in
Fig.~\ref{move:val1:torus:fig}-left, possibly with $v_1=v_2$.
    \begin{figure}
    \begin{center}
    \includegraphics[scale=0.3]{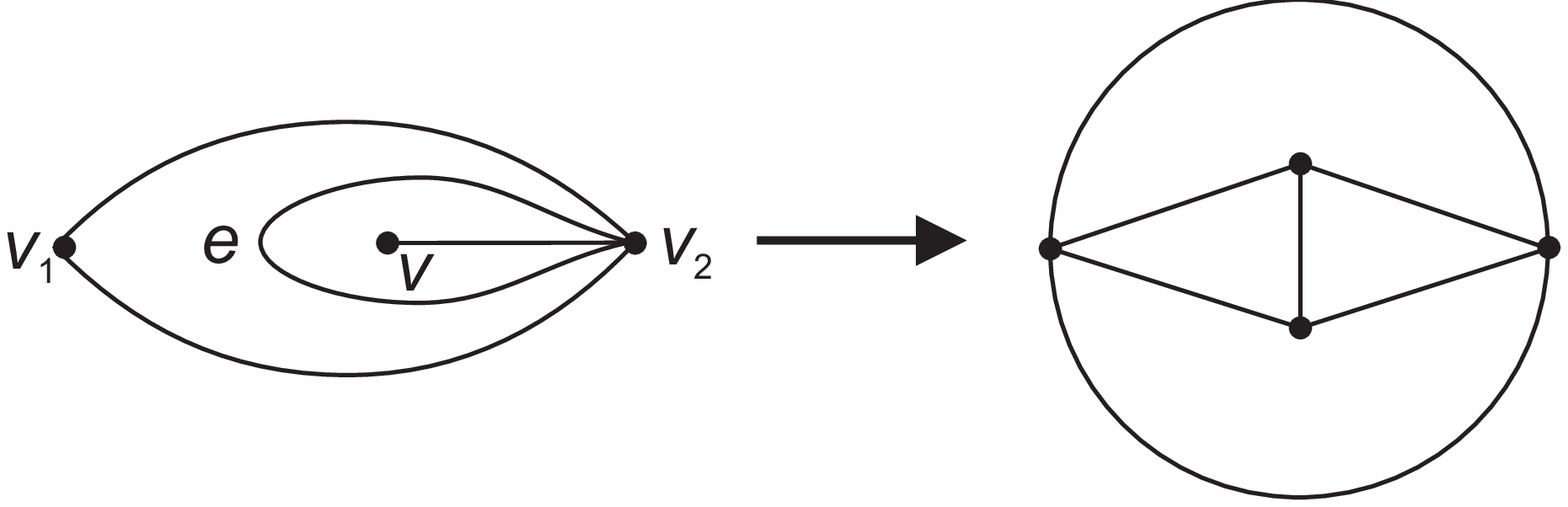}
    \mycap{Destroying a vertex of valence $1$; note that $v_1$ and $v_2$ may coincide.}
    \label{move:val1:torus:fig}
    \end{center}
    \end{figure}
We then perform the move (s2) at the edge $e$, followed by the move
(s1$'$) at the vertex $v$, as shown in
Fig.~\ref{move:val1:torus:fig}. This destroys the vertex $v$ of
valence $1$ without creating new vertices of valence $1$ or $2$, so
we can repeat the procedure until no vertices of valence $1$ are
left.\vspace{4mm}

\noindent (2) Assume now that $\delta$ has vertex $v$ of valence $2$
but not of valence $1$. We then perform (s1$'$) at $v$ and proceed
until no vertex of valence $1$ or $2$ remains.\vspace{4mm}

\noindent (3) Assume now that all vertices of $\delta$ have valence
at least $3$. The proof is by induction on the number $k$ of
vertices of $\delta$. The base of induction is $k=1$ and it follows
from the Euler characteristic argument that in this case we indeed
have the triangulation with two triangles. Assume that
$k>1$.\vspace{4mm}

\noindent\emph{Case 3.1.} Suppose that $\delta$ contains a vertex
$v$ of valence precisely $3$. It is easy to see that in this case
the closure of any edge incident to $v$ is a segment. There are now
the following two possibilities:\vspace{3mm}

\noindent\emph{Case 3.1.a.} There exists a vertex $v$ of valence $3$
such that the three vertices $v_1$, $v_2$, $v_3$ joined to $v$ by an
edge are all of valence at least $4$. Observe first that if two of
these vertices, say $v_2$ and $v_3$, coincide then the valence of
$w=v_2=v_3$ is at least 5. We now perform the move (s3) at $v$, the
effect of doing which is that $v$ disappears and the valences of
$v_1$, $v_2$, $v_3$ are decreased by $1$, if they are all distinct;
if they are not, the valence of $w$ is decreased by $2$. In either
case, by the assumption these new valences are still at least $3$.
In addition, observe that the valences of all the other vertices are
not affected, and that the total number of vertices in $\delta$ is
decreased by~$1$.\vspace{3mm}

\noindent\emph{Case 3.1.b.} The second possibility is that for any
vertex $v$ of valence $3$ at least one of the vertices $v_1$, $v_2$,
$v_3$ joined to $v$ by an edge has valence $3$, with $v_i$'s not
necessarily all distinct. Choose any such $v$ and observe that we
must have the situation as shown in Fig.~\ref{val3:sp:case:fig},
    \begin{figure}
    \begin{center}
    \includegraphics[scale=0.3]{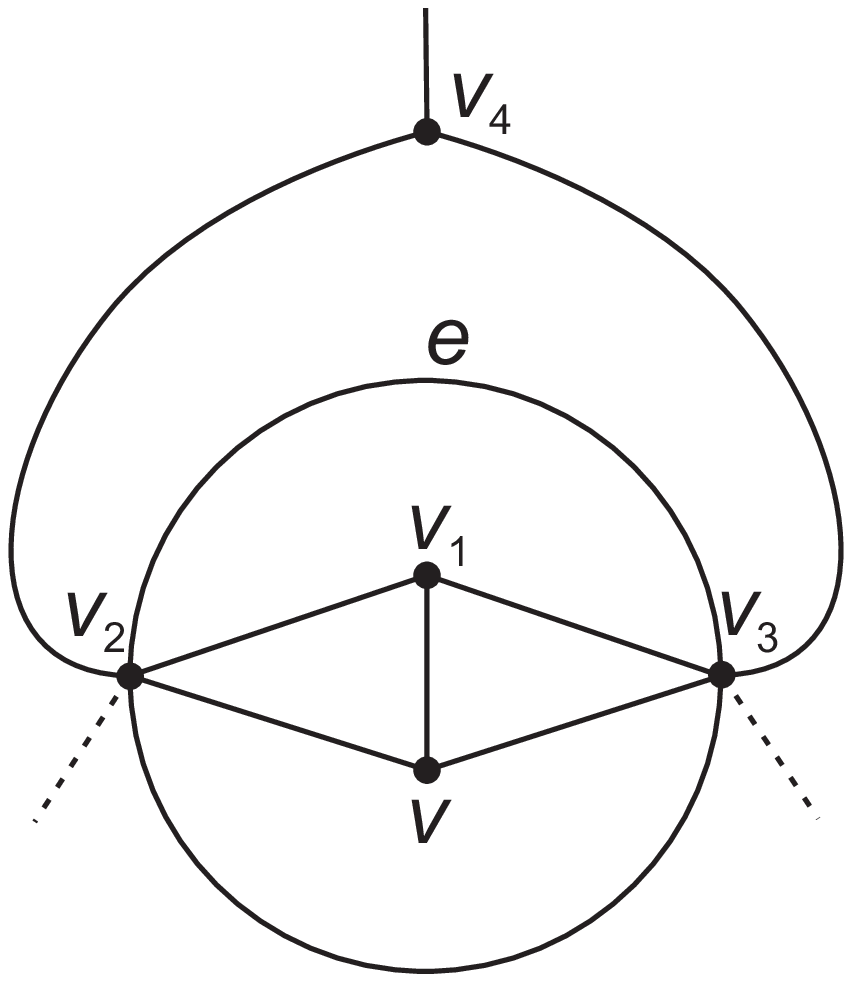}
    \mycap{Two vertices of valence $3$ joined by an edge.}
    \label{val3:sp:case:fig}
    \end{center}
    \end{figure}
with the valences of $v_2$, $v_3$ being at least $5$ and where we
may or may not have $v_2=v_3$. Denote by $e$ one of the edges
joining $v_2$ to $v_3$; then $e$ belongs to two triangles, one with
vertices $v_1$, $v_2$, $v_3$ and the other with vertices $v_2$,
$v_3$, $v_4$; we also have that $v_4\neq v_1$. Thus, performing the
move (s2) at $e$ has the following effect on the set of valences:
\begin{itemize}
  \item the valence of $v_1$ becomes $4$;
  \item the valence of $v_4$ is increased by $1$;
  \item if $v_2\neq v_3$ then the valence of each of them decreases by
  $1$, and if they coincide then the total valence is decreased by $2$,
  however, in this case it was at least $6$ to begin with.
\end{itemize}
Notice that no other vertices are affected by the move and that
after the move the valences of $v_2$ and $v_3$ (or that of
$v_2=v_3$) are still at least $4$. Therefore we can proceed as in
Case 3.1.a.\vspace{4mm}

\noindent\emph{Case 3.2.} Suppose that all vertices of $\delta$ have
valence at least $4$. Again, there are two
possibilities:\vspace{3mm}

\noindent\emph{Case 3.2.a.} There is a vertex such that the closure
of any edge incident to it is a segment. Let $v$ be a vertex of
minimal valence among all such vertices. Denote the edges incident
to $v$ by $e_1$, $\ldots$, $e_k$ in the clockwise order around $v$
and let $v_i$ be the other endpoint of $e_i$ for $i=1,\ldots,k$ (we
note that some of the $v_i$'s may coincide). Perform now the move
(s2) consecutively along the edges $e_2$, $e_3$, $\ldots$,
$e_{k-2}$, followed by the move (s3) at the vertex $v$, see
Fig.~\ref{case:3:2:a:fig}.
    \begin{figure}
    \begin{center}
    \includegraphics[scale=0.4]{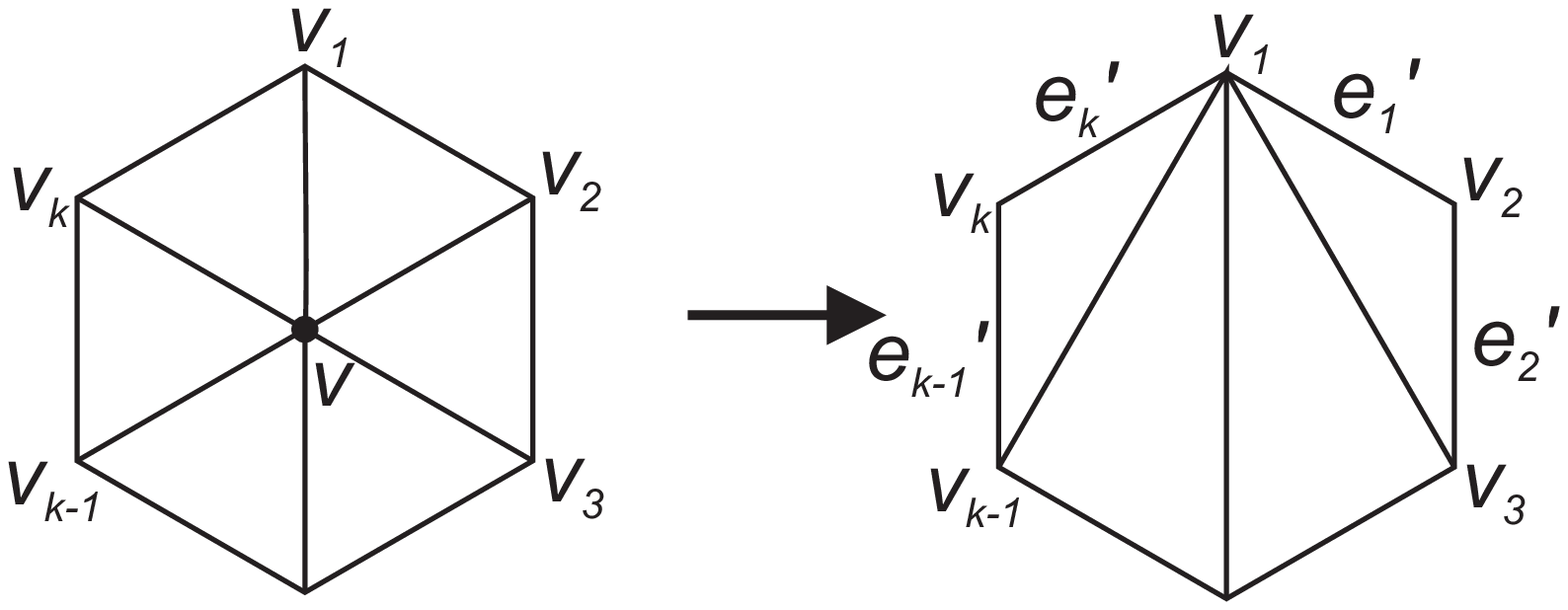}
    \mycap{Destroying vertex $v$.}
    \label{case:3:2:a:fig}
    \end{center}
    \end{figure}
Observe that:
\begin{itemize}
  \item the valence of any vertex (among $v_i$'s) distinct from $v_2$ and $v_k$
  does not decrease;
  \item if $v_2\neq v_k$ then the valence of each of these vertices is
  decreased at most by $1$, hence it is still at least $3$.
\end{itemize}

Suppose now that $v_2=v_k=w$, and that the valence of $w$ is at most
$2$; this implies in particular that $w\neq v_1$ and that the
valence of $w$ before the move was at most $4$. We claim that in
this case $w$ cannot be incident to a loop; indeed, if it were, the
loop edge would be distinct from the edge $e_1'$, hence after the
moves the valence of $w$ would be at least $3$. Now, since $v$ was a
vertex of minimal valence among those edges not incident to any
loop, this implies that $k=4$ and that among the edges $e_1'$,
$e_2'$, $e_3'$, $e_4'$ there are precisely two distinct ones.
However, this would imply that we already have a triangulation of
the torus into two triangles.

Since the total number of vertices diminishes, we can repeat the
entire procedure, starting from (3), until we either reach a
triangulation with exactly two triangles or encounter the situation
as in Case 3.2.b below.\vspace{3mm}

\noindent\emph{Case 3.2.b} Suppose that all vertices are incident to
at least one edge whose closure is a loop. In this case all such
loops are nontrivial in $T$ because if one of them were the boundary
of a disc, an innermost loop in this disc would contradict the fact
that we are dealing with a triangulation. Then one can see directly
that, up to applying the move (s2), we have a situation as shown in
Fig.~\ref{trn:all:loops:fig}
    \begin{figure}
    \begin{center}
    \includegraphics[scale=0.4]{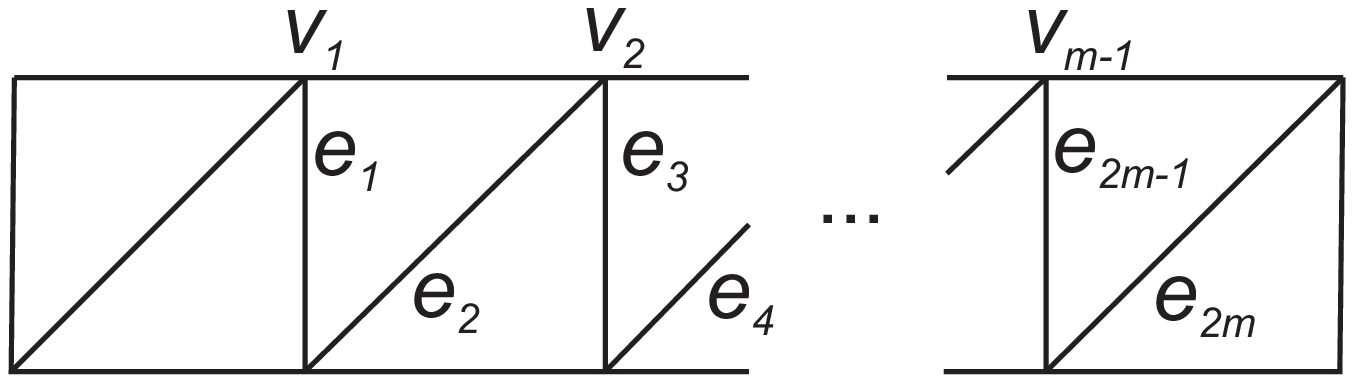}
    \mycap{A series of particular triangulations of a torus; the opposite sides of
    the rectangle are identified forming the torus.}
    \label{trn:all:loops:fig}
    \end{center}
    \end{figure}
(more precisely, the triangulation either will be exactly as in the
figure or can be obtained from it by performing (s2) at the
diagonals of some of the squares). In this case the proof is by
induction on the number $m$ of parallel loops.\vspace{3mm}

\noindent If $m=1$, we already have the desired triangulation. If
$m>1$ then we perform the move (s2) at $e_1$, then (s2) at $e_2$,
and finally (s3) at $v$. This again yields a triangulation as in
Case 3.2.b but the number $m$ decreases by~$1$, whence the
conclusion.
\end{proof}

\paragraph{Constructing triangulations of solid tori} Let now $H$ be
a solid torus, and let $\delta$ be a triangulation of $T=\partial
H$. We define the following moves consisting in inserting
\emph{inside} $H$ some $2$- or $3$-dimensional simplices.
\begin{enumerate}
  \item[(\^{s}1)] Let $v$ be a vertex of $\delta$, and let $e'$, $e''$ be
  two distinct edges adjacent to $v$. The move (\^{s}1) consists in
  gluing to $T$, inside $H$, a triangle via a
  piecewise-linear identification of two of its edges with $e'\cup
  v\cup e''$. We say that the move (\^{s}1) \emph{is performed at $v$ along $e',e''$.}

  \item[(\^{s}2)] Let $e$ be an edge of $\delta$ adjacent to two distinct faces
  $\alpha$ and $\beta$. The move (\^{s}2) consists in
  gluing to $T$, inside $H$, a tetrahedron via a piecewise-linear
  identification of two of its faces with $\alpha\cup\beta$. We say
  that the move (\^{s}2) \emph{is performed at $e$}; note that this is the same
  operation as the one performed in the construction of any layered
  triangulation.

  \item[(\^{s}3)] Let $v$ be a vertex of $\delta$ incident to exactly
  $3$ distinct edges. Denote the three triangles incident to $v$ by
  $\alpha_1,\alpha_2,\alpha_3$. The move (\^{s}3) consists in
  gluing to $T$, inside $H$, a tetrahedron via a piecewise-linear
  identification of three of its faces with
  $\alpha_1\cup\alpha_2\cup\alpha_3$.
\end{enumerate}
We also define the following composite move.
\begin{enumerate}
  \item[(\^{s}1$'$)] Let $v$ be a vertex of $\delta$ of valence $2$. Denote the two
  edges adjacent to $v$ by $e'$ and $e''$. The move (\^{s}1$'$) consists in
  performing move (\^{s}1) at $v$ along $e',e''$ and subsequently
  performing move (\^{s}2) along the newly created edge.
\end{enumerate}

There is a natural correspondence between the moves (s$\star$) and
(\^{s}$\star$) described in the following

\begin{oss}
Let $H$ be a solid torus, and let $\delta$ be a triangulation of
$T=\partial H$. Let $\gamma$ be the simplex added by a move
(\^{s}$\star$) applied to $H$, and let $\delta'$ be the induced
triangulation of the surface $T'$, where $T'$ is the connected
component of $\partial U(T\cup\gamma)$ that lies inside $H$. Then
$\delta'$ is obtained from $\delta$ by a move (s$\star$).
\end{oss}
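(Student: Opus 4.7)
The plan is to verify the correspondence separately for each of the three basic moves $(\hat{s}1)$, $(\hat{s}2)$, $(\hat{s}3)$; the composite move $(\hat{s}1')$ then follows automatically since it is a concatenation of $(\hat{s}1)$ and $(\hat{s}2)$ on the solid-torus side, inducing the concatenation of $(s1)$ and $(s2)$ on the surface side, which is $(s1')$. In each of the three cases the statement is purely local, since the move modifies $T$ only in a neighbourhood of the cells along which $\gamma$ is glued, and outside this neighbourhood $T'$ coincides with $T$ up to a collar. Hence the task reduces to identifying the portion of $\partial U(T\cup\gamma)$ lying in $H$ in a small regular neighbourhood of the relevant cells.

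For $(\hat{s}1)$, the added triangle $\gamma$ is attached to $T$ along $e'\cup v\cup e''$, with a single free edge $f$ in $\Int H$. Thickening $T\cup\gamma$ and taking the inner boundary component $T'$ effectively cuts $T$ open along $e'\cup v\cup e''$ and inserts two parallel copies of $\gamma$ meeting along a copy of $f$; this is precisely the replacement of the square $e'\cup v\cup e''$ by two triangles sharing the diagonal corresponding to $f$, i.e.\ the move $(s1)$ performed at $v$ along $e',e''$.

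For $(\hat{s}2)$, the tetrahedron $\gamma$ is glued along the two faces $\alpha$, $\beta$ sharing the edge $e$; the remaining two faces $\alpha'$, $\beta'$ share the opposite edge $e^{*}$ of $\gamma$, and lie in $\Int H$ except along their boundary. In a regular neighbourhood of $\alpha\cup\beta$, the component $T'$ of $\partial U(T\cup\gamma)$ inside $H$ is obtained from the bigon $\alpha\cup\beta$ by replacing it with $\alpha'\cup\beta'$; this exchanges the diagonal $e$ with $e^{*}$, which is exactly the flip $(s2)$ at $e$. Analogously, for $(\hat{s}3)$, where $\gamma$ is glued along three faces $\alpha_1,\alpha_2,\alpha_3$ meeting at the trivalent vertex $v$, the only remaining face $\alpha^{*}$ of $\gamma$ is a triangle whose boundary coincides with the outer boundary of $\alpha_1\cup\alpha_2\cup\alpha_3$; the inner component $T'$ is obtained by replacing the disc $\alpha_1\cup\alpha_2\cup\alpha_3$ (bounded by the three outer edges and containing $v$ and its three incident edges) with the single triangle $\alpha^{*}$, which is exactly the move $(s3)$ at $v$.

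The only mildly delicate point is making sure that these local pictures are correct when $\delta$ has identifications (for instance when $v_2=v_3$ in the $(\hat{s}1)$ case, or when the two faces in $(\hat{s}2)$ meet along more than one edge), but since we are working with regular neighbourhoods inside the ambient solid torus $H$, the local description of $U(T\cup\gamma)$ pulls back correctly to each local model; the quotient by identifications commutes with taking the boundary of the regular neighbourhood. I therefore expect no obstacles beyond a careful pictorial check of the three models, which is essentially what is already drawn for $(s1)$--$(s3)$ in Figure~\ref{moves:triang:fig}.
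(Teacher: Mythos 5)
Your case-by-case local verification is correct, and it is precisely the justification the paper leaves implicit: the Observation is stated without proof, being regarded as evident from the regular-neighbourhood picture. Your identification of the inner boundary component of $U(T\cup\gamma)$ in each of the three cases (two copies of the triangle sharing a push-off of the free edge for (\^{s}1), the two free faces replacing $\alpha\cup\beta$ for (\^{s}2), the single remaining face for (\^{s}3)) matches the intended pictures in Figure~\ref{moves:triang:fig}, so there is nothing to add.
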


We can now establish the following

\begin{lemma}\label{trn:tor:2:tor:lem}
Let $\delta$ be a triangulation of a two-dimensional torus
$T=\partial H$, where $H$ is a solid torus. Then there exists a
triangulation $\tau$ of $H$ such that $\tau|_{\partial H}=\delta$,
and $\tau^{(0)}=\delta^{(0)}$.
\end{lemma}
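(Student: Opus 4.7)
The plan is to reduce the boundary triangulation of $H$ by pushing simplices inward, and then to fill the remaining solid torus by a layered triangulation. By Lemma~\ref{trn:tor:2:tor:prelim:lem} there is a finite sequence of moves (s1), (s2), (s3) that transforms $\delta$ into the standard two-triangle triangulation $\delta_0$ of the torus shown in Fig.~\ref{trn:solid:tor:easy:fig}-right. I would then apply the three-dimensional analogues (\^{s}1), (\^{s}2), (\^{s}3) to $H$ in the corresponding order, each such move gluing inside $H$ either a triangle (case (\^{s}1)) or a tetrahedron (cases (\^{s}2), (\^{s}3)) against the current innermost surface. By iterated application of the Observation immediately preceding the statement, after performing the whole sequence the component $T_\text{fin}$ of $\partial U(T \cup \gamma_1 \cup \cdots \cup \gamma_k)$ lying inside $H$ (where $\gamma_1,\ldots,\gamma_k$ are the simplices inserted along the way) carries the triangulation $\delta_0$ and bounds a solid torus $H_\text{fin}\subset H$.

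To fill $H_\text{fin}$, I would use the fact that the $1$-skeleton of $\delta_0$ on $T_\text{fin}$ is dual to a $\theta$-curve whose complement in $T_\text{fin}$ is a single disc---exactly the hypothesis of Lemma~\ref{exist:layered:lem}. That lemma therefore supplies a layered triangulation of $H_\text{fin}$ whose restriction to $T_\text{fin}$ coincides with $\delta_0$. Combining the simplices glued during the first stage of (\^{s}$\star$)-moves with those of this layered triangulation produces a triangulation $\tau$ of $H$ satisfying $\tau|_{\partial H}=\delta$.

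The main thing to verify is the equality $\tau^{(0)}=\delta^{(0)}$. None of the moves (\^{s}$\star$) introduces a new $0$-cell of $H$: by construction each inserted simplex is glued along existing faces, so every one of its vertices is identified with an existing vertex of $\delta^{(0)}$---even when the companion surface move (s$\star$) formally ``splits'' a vertex on the surface side, the two resulting copies correspond to the same $0$-cell of the ambient $3$-manifold. Layered triangulations in turn have no interior vertices, and the vertices of the restriction of the layered triangulation to $T_\text{fin}$ are exactly those of $\delta_0$, which already lie in $\delta^{(0)}$; conversely, every vertex of $\delta$ survives as a $0$-cell of $\tau$ since $\partial H$ is untouched throughout. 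The main technical burden I expect is the careful inductive verification that the complement of the simplices inserted at the first stage is still a solid torus at every intermediate step, so that the next (\^{s}$\star$)-move is applicable; this should follow from the local topology of each gluing (adding a properly embedded disc or ball whose boundary is essentially an arc or a $2$-disc on the current boundary torus) together with the repeated use of the Observation.
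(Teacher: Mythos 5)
Your proposal follows essentially the same route as the paper: run the sequence of moves (s1), (s2), (s3) from Lemma~\ref{trn:tor:2:tor:prelim:lem}, realize each by the corresponding move (\^{s}$\star$) glued inside $H$ against the current innermost surface, and finish by filling the remaining solid torus (whose boundary carries the two-triangle triangulation) with a suitable layered triangulation as provided by Lemma~\ref{exist:layered:lem}. Your explicit checks that no new $0$-cells are created and that the layered part has no interior vertices correspond to the paper's remark that all vertices of the resulting $\tau$ lie on $\partial H$, so the argument is correct and matches the paper's.
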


\begin{proof}
We construct the desired triangulation by following the proof of
Lemma~\ref{trn:tor:2:tor:prelim:lem} and performing for each move
(s$\star$) the corresponding move (\^{s}$\star$). Here the surface
that tells us which move to apply, is, at each step, the connected
component of the boundary of $U(T\cup\{\gamma_i\})$ (where
$\gamma_i$ are all the simplices added by the moves (\^{s}$\star$)
already performed) that lies in the interior of $H$ and is endowed
with the obvious triangulation, whereas the simplices themselves are
glued, inside $H$, to the simplicial complex $T\cup\{\gamma_i\}$. By
Lemma~\ref{trn:tor:2:tor:prelim:lem} we will end up with a solid
torus $H'\subset H$ (possibly with immersed boundary) such that
$\partial H'$ is triangulated into two triangles (while
$H\setminus\Int H'$ is already triangulated in a desired fashion,
\emph{i.e.} such that all the vertices are in $\partial H$), and we
conclude the proof by adding a suitable layered triangulation.
\end{proof}

\paragraph{Proof of Theorem~\ref{int:protoMom:2:trn:thm}} Denote the
lateral tori of $(M,\Sigma,\Delta)$ by $T_1$, $\ldots$, $T_k$ and
consider the triangulations $\delta'(T_i)$ induced by $\Delta$, as
described at the beginning of this section. Let $H_i$ be the solid
torus bounded by $T_i$ in $N$. For each $i=1,\ldots,k$ denote by
$\tau_i$ the triangulation of $H_i$ provided by
Lemma~\ref{trn:tor:2:tor:lem} applied to $T_i=\partial H_i$ and
$\delta'(T_i)$. It now follows directly from the construction that
the triangulations $\tau_i$, $i=1,\ldots,k$, yield an ideal
triangulation $\tau$ of $N$ (we emphasize that the triangulation is
an ideal one because its vertices, obtained by contracting the lakes
of the $T_i$'s, all lie on $\Sigma$) and that the protoMom-structure
$(M,\Sigma,\Delta)$ is $\tau$-induced, whence the
conclusion.\finedimo

\section{Mom-subgraphs in $4$-valent graphs}\label{mom:subgraph:sec}

We will now describe a combinatorial tool that we will use to show
that any two internal protoMom-structures on a given $N$ are related
by the combinatorial moves that will be described in
Subsection~\ref{moves:Mom:subsec}. This tool is that of a
\emph{Mom-subgraph}; as will be explained in
Section~\ref{moves:main:sec} (see Lemma~\ref{max:e:mom:lem}) these
objects are dual, in a certain natural sense, to a particular type
of triangulation-induced protoMom-structures, the type that will
play an especially important role in establishing the rest of our
results.

\subsection{Minimal Mom-subgraphs}

The above-mentioned notion of a Mom-subgraph comes in two types, and
in this subsection we study the first of them, called a
\emph{minimal Mom-subgraph}.

\begin{defn}
Let $G$ be a connected $4$-valent graph. A \emph{minimal
Mom-subgraph} $\Gamma$ of $G$ is a complete coloring of the edges of
$G$ by colors $\{t,c,f\}$ such that:
\begin{itemize}
  \item the union $T(\Gamma)$ of the edges with color $t$ gives a maximal
  tree in $G$;
  \item precisely one edge, denoted $c(\Gamma)$, has color $c$.
\end{itemize}
\end{defn}

We also define $\hat{T}(\Gamma):=T(\Gamma)\cup c(\Gamma)$, and
$C(\Gamma)$ as the only simple closed curve contained in
$\hat{T}(\Gamma)$. Note that $c(\Gamma)\subset C(\Gamma)$.

\paragraph{Moves on minimal Mom-subgraphs}
We first describe two \emph{admissible moves} that can be applied to
a minimal Mom-subgraph $\Gamma$ of some $G$.
\begin{enumerate}
  \item[(m1)] if $e=c(\Gamma)$ and some edge $e'$ of color $f$ share a vertex $v$,
  switch the colors of $e$ and $e'$;
  \item[(m2)] if $e\subset T(\Gamma)$ and $e'\subset (G\setminus T(\Gamma))$
  share a vertex $v$, and $\left(T(\Gamma)\setminus e \right)\cup e'$ is
  connected, switch the colors of $e$ and $e'$.
\end{enumerate}
    \begin{figure}
    \begin{center}
    \includegraphics[scale=0.5]{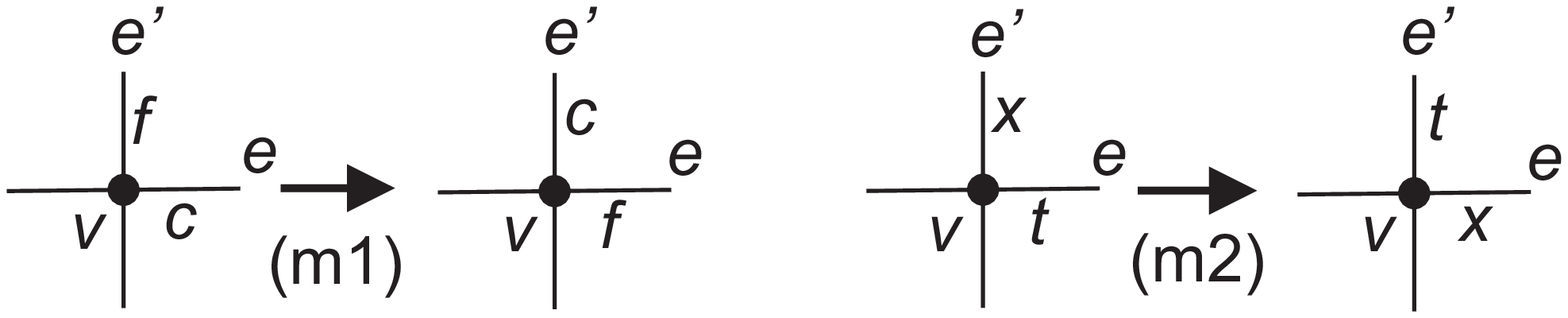}
    \mycap{Moves on Mom-subgraphs; on the right, we have $x\in\{c,f\}$.}
    \label{moves:m1:m2:fig}
    \end{center}
    \end{figure}
In both cases we will say that the move \emph{is applied at
$(v,e,e')$}, see Fig.~\ref{moves:m1:m2:fig}. The following statement
is obvious.

\begin{lemma}\label{m1m2:to:Mom:subgraph:lem}
The moves (m1) and (m2) transform a minimal Mom-subgraph into
another one.
\end{lemma}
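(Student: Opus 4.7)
The plan is to verify, for each move separately, that the two defining properties of a minimal Mom-subgraph still hold after the move: namely, (i) that the edges colored $t$ form a maximal (equivalently, spanning) tree in $G$, and (ii) that precisely one edge carries color $c$.

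For move (m1) the argument will be immediate. Since both $e$ and $e'$ lie in $G\setminus T(\Gamma)$, the swap does not alter the set of $t$-colored edges, so $T(\Gamma)$ is preserved as a spanning tree. The move exchanges one $c$-edge with one $f$-edge, so afterwards $e'$ is the unique edge of color $c$.

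For move (m2) I would first check (i) by setting $T' := (T(\Gamma)\setminus e)\cup e'$ and observing that $T'$ is connected by hypothesis, spans $V(G)$ (the vertex set is unaffected by swapping edges, and $T(\Gamma)$ was spanning), and has cardinality $|V(G)|-1$ (the same as $T(\Gamma)$). A connected graph on $n$ vertices with $n-1$ edges is a tree, so $T'$ is a spanning tree of $G$. To verify (ii), I would split into cases based on the previous color of $e'$: if $e'$ was the $c$-edge $c(\Gamma)$, then after the swap $e$ inherits color $c$ and becomes the unique $c$-edge; if $e'$ had color $f$, then $c(\Gamma)$ is untouched and still provides the unique $c$-edge. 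In either case exactly one edge carries color $c$.

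The only point requiring real (albeit elementary) verification is the tree property of $T'$ in move (m2), and this is precisely what the connectivity hypothesis built into the definition of (m2) guarantees via the standard ``connected $+$ $(n-1)$ edges $\Rightarrow$ tree'' criterion. This explains why the lemma is described as obvious: once one unpacks the definition, there is no genuine obstacle, but the hypothesis on $\left(T(\Gamma)\setminus e\right)\cup e'$ in (m2) is indispensable, without it the move could create a cycle and destroy the tree property.
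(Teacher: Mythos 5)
Your proof is correct; the paper itself offers no argument, dismissing the lemma as obvious, and your verification is exactly the intended unpacking of the definitions. The one point worth stating slightly more carefully is why $T'$ still spans $V(G)$: removing $e$ from the spanning tree $T(\Gamma)$ leaves a forest whose two components still partition all of $V(G)$, and the connectivity hypothesis forces $e'$ to rejoin them, so $T'$ is a spanning tree --- but this is just making explicit what you already invoke.
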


We will mostly use not a single move (m2) but a particular
composition of several such moves, namely:
\begin{enumerate}
 \item[(\~{m}2)] if $e\subset T(\Gamma)$ and $e'\subset
 \left(G\setminus T(\Gamma)\right)$, and $e$ is contained in the (unique) simple path $\ell$ in
  $T(\Gamma)$ that joins the endpoints of $e'$, switch the colors
  of $e$ and $e'$.
\end{enumerate}
We will say that the move~(\~{m}2) \emph{is applied to $(e,e')$
along the cycle $C':=\ell\cup e'$}.

\begin{lemma}\label{m2:along:cycle:lem}
The move~(\~{m}2) is a composition of moves~(m2).
\end{lemma}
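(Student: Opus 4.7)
The plan is to prove the lemma by induction on the length of the simple path $\ell$ in $T(\Gamma)$ joining the endpoints of $e'$, equivalently, by sliding the unique non-tree edge one step at a time around the cycle $C'=\ell\cup e'$ until it reaches the target position. Label the vertices of $\ell$ in order as $v_0,v_1,\ldots,v_k$ so that $e'=v_0v_k$, write $e_i=v_{i-1}v_i$ for the edges of $\ell$, and, up to reversing the orientation of $\ell$, assume $e=e_j$ for some $1\le j\le k$. The base case $j=1$ is immediate: then $e$ and $e'$ share both endpoints $v_0$ and $v_1$, and $\bigl(T(\Gamma)\setminus e\bigr)\cup e'$ is connected since $e\cup e'$ is the only cycle in $T(\Gamma)\cup e'$, so a single move~(m2) applied at $(v_0,e,e')$ realizes (\~{m}2).

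For the inductive step I would set $e_0:=e'$ and perform, for $i=1,\ldots,j$, the move~(m2) at $(v_{i-1},e_i,e_{i-1})$. A direct bookkeeping check then shows that the tree after step $i$ equals
\[
T_i(\Gamma)=\bigl(T(\Gamma)\setminus\{e_i\}\bigr)\cup\{e'\},
\]
and that after step $i$ the edge $e_i$ carries the color originally worn by $e'$, while each of $e_1,\ldots,e_{i-1}$ has been restored to color $t$. In particular, after step $j$ the edges $e=e_j$ and $e'$ have exchanged their colors and every other edge retains its original color, which is precisely the effect of (\~{m}2).

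It remains to verify admissibility at each intermediate step. The adjacency requirement is automatic: $e_i$ and $e_{i-1}$ meet at $v_{i-1}$ for $i\ge 2$, while $e_1$ and $e_0=e'$ meet at $v_0$. For the connectivity requirement at step $i$ we must check that $\bigl(T_{i-1}(\Gamma)\setminus e_i\bigr)\cup e_{i-1}$ is connected; but this set simplifies to $\bigl(T(\Gamma)\setminus e_i\bigr)\cup e'$, and removing $e_i$ from $T(\Gamma)$ splits the tree into two components containing $\{v_0,\ldots,v_{i-1}\}$ and $\{v_i,\ldots,v_k\}$ respectively (since $\ell$ is the unique path in $T(\Gamma)$ between the endpoints of $e'$), so adding $e'=v_0v_k$ reconnects them, and $e_i\in T_{i-1}(\Gamma)$, $e_{i-1}\notin T_{i-1}(\Gamma)$ both hold by construction.

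I do not anticipate a genuine obstacle here: the geometry of the cycle $C'$ makes the sliding procedure transparent, and the only point that requires a bit of care is tracking the colors through the successive swaps to confirm that the cumulative effect on $\Gamma$ coincides exactly with the single move~(\~{m}2), rather than, for example, depositing the color of $e'$ on some intermediate $e_i$ with $i<j$.
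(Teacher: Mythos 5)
Your proof is correct and follows essentially the same strategy as the paper's: slide the non-tree edge along $\ell$ one edge at a time via moves (m2) applied at consecutive vertices, checking at each step that removing $e_i$ from the current tree leaves two components that $e'$ (respectively $e_{i-1}$) reconnects. The paper phrases this as an induction on the length of the component of $\ell\setminus\Int(e)$ adjacent to $e'$, while you write out the full sequence of swaps with explicit color bookkeeping, but the underlying sequence of (m2)-moves is identical.
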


\begin{proof}
Let $\ell'$ be one of the connected components of
$\ell\setminus\Int(e)$, and let $e_1\subset\ell'$ be the edge
incident to $e'$. Denote the endpoints of $e'$ by $v'=e'\cap e_1$
and $v''$, and denote the other endpoint of $e_1$ by $v_1$. Observe
now that, since $e_1$ is separating for $T(\Gamma)$, the vertices
$v_1$ and $v'$ are contained in different connected components of
$T(\Gamma)\setminus\Int(e_1)$. Furthermore, the existence of the
path $\ell$ implies that $v''$ belongs to the same connected
component as $v_1$, hence we can apply the move (m2) at
$(v',e_1,e')$. The conclusion then follows by the induction on the
number of edges in the fixed $\ell'$.
\end{proof}

We will refer to the moves (m1) and (m2) as to \emph{admissible
moves}; by the above lemma, (\~{m}2) is a composition of such.

\paragraph{Relating minimal Mom-subgraphs by the moves} We now establish
the following result.

\begin{prop}\label{min:Mom:2:min:Mon:prop}
Let $G$ be a connected $4$-valent graph. Then any two minimal
Mom-subgraphs $\Gamma_1$ and $\Gamma_2$ of $G$ are connected by a
sequence of admissible moves.
\end{prop}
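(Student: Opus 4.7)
The plan is to induct on $d := |T(\Gamma_1) \triangle T(\Gamma_2)|$. The inductive step will reduce $d$ by two using a single $(\~{m}2)$ move realizing the standard graphic-matroid exchange axiom, while the base case $d = 0$ (equal spanning trees) requires adjusting the $c$-edge via a short ``three-step detour''.

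For the inductive step I would pick any edge $e' \in T(\Gamma_2) \setminus T(\Gamma_1)$; the unique cycle $K$ contained in $T(\Gamma_1) \cup \{e'\}$ cannot be fully contained in $T(\Gamma_2) \cup \{e'\}$, since $T(\Gamma_2)$ is a tree, so some $e \in K \cap (T(\Gamma_1) \setminus T(\Gamma_2))$ exists. Since such an $e$ lies on the tree-path in $T(\Gamma_1)$ between the endpoints of $e'$, the move $(\~{m}2)$ applied to $(e,e')$ is admissible by definition and reduces the symmetric difference by $2$. Iterating delivers a minimal Mom-subgraph $\Gamma_1'$ with $T(\Gamma_1') = T(\Gamma_2)$.

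For the base case I would set $T := T(\Gamma_1') = T(\Gamma_2)$, $c_1 := c(\Gamma_1')$, $c_2 := c(\Gamma_2)$, and denote by $Q_i \subset T$ the tree-path between the endpoints of $c_i$. If $c_1, c_2$ share a vertex, a single $(m1)$ move suffices. Otherwise, the main construction is: pick any $e_* \in Q_1 \cap Q_2$ (when this intersection is non-empty) and apply $(\~{m}2)$ successively to the pairs $(e_*, c_2)$, then $(c_2, c_1)$, then $(c_1, e_*)$. A direct verification of the tree-path conditions at each intermediate stage---using crucially that $e_*$ lies on both $Q_1$ and $Q_2$, so that after the first swap the new tree-path between the endpoints of $c_1$ necessarily traverses the freshly-introduced tree edge $c_2$---shows that every move is admissible and that the cumulative effect transforms $(T, c_1)$ into $(T, c_2)$ while restoring the spanning tree $T$.

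The main obstacle is the residual sub-case in which $Q_1 \cap Q_2 = \emptyset$ and $c_1, c_2$ are vertex-disjoint. My approach here would be to exhibit an intermediate non-tree edge $c' \in E \setminus T$ (possibly after a short preparatory sequence of admissible moves that modifies the ambient tree to create the needed non-tree edges) such that either a single $(m1)$ or the three-step detour above applies both from $(T, c_1)$ to $(T, c')$ and from $(T, c')$ to $(T, c_2)$; a finite iteration of this procedure then reduces to the previously handled configurations. The existence of such an intermediate $c'$, or of a short chain of such, should follow from the connectivity of $G$ together with the 4-valence hypothesis by a path-chasing argument along a path in $G$ joining $c_1$ and $c_2$, using $(\~{m}2)$ moves on tree edges crossed by this path to temporarily create non-tree edges that bridge the tree-path intersection condition; this verification is expected to constitute the technical heart of the proof.
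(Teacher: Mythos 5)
Your overall strategy --- first equalize the spanning trees by matroid-exchange instances of (\~{m}2), then relocate the $c$-edge within a fixed common tree --- matches the paper's, and the parts you actually carry out are correct. The tree-equalization step is sound (it is essentially the paper's Case~(1)), and your three-move detour $(e_*,c_2)\to(c_2,c_1)\to(c_1,e_*)$ is a genuinely cleaner replacement for the paper's five-move sequence in the case where the tree-paths $Q_1$ and $Q_2$ share an edge: as you indicate, after the first swap the edge $c_2$ is the unique edge of the new tree joining the two components of $T\setminus e_*$, so it must lie on the new tree-path between the endpoints of $c_1$, and symmetrically for the third move.

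However, the case you defer --- $c_1$, $c_2$ vertex-disjoint with $Q_1\cap Q_2$ containing no edge (note your case split also omits the configuration where the two cycles meet in a single vertex, where (m1) does not apply and no common $e_*$ exists) --- is exactly where the paper has to do real work, and your sketch does not contain the needed argument. The paper handles it by an inner induction on the distance in $T$ between the cycles $C(\Gamma_1)$ and $C(\Gamma_2)$; the key step is the existence of an $f$-edge with exactly one endpoint in the subtree $T'$ of $T\setminus w$ containing the path toward $C(\Gamma_2)$, proved by a parity argument (otherwise one would obtain a finite graph with exactly one vertex of odd valence, using the $4$-valence of $G$), after which a five-move sequence replaces $C(\Gamma_1)$ by a strictly closer cycle. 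Your proposed ``path-chasing along a path in $G$ joining $c_1$ and $c_2$'' names the right ingredients (connectivity and $4$-valence) but does not establish that the required intermediate non-tree edges $c'$ exist with fundamental cycles successively satisfying the intersection hypotheses of your detour; without that existence argument the proof is incomplete at its hardest point, as you yourself acknowledge.
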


\begin{proof}
The proof is by induction on the number $n(\Gamma_1,\Gamma_2)$ of
edges $e$ of $G$ having different colors in $\Gamma_1$ and
$\Gamma_2$, with $e\subset\hat{T}(\Gamma_1)$ or
$e\subset\hat{T}(\Gamma_2)$. Note that, since $\hat{T}(\Gamma_1)$
and $\hat{T}(\Gamma_2)$ contain the same number of edges,
$n(\Gamma_1,\Gamma_2)$ cannot be equal to $1$. Furthermore, if
$n(\Gamma_1,\Gamma_2)=0$ then $\hat{T}(\Gamma_1)=\hat{T}(\Gamma_2)$,
therefore $C(\Gamma_1)=C(\Gamma_2)$, and the conclusion is obtained
by applying the move (\~{m}2) at the two $c$-edges along
$C(\Gamma_1)$.\vspace{4mm}

\noindent Suppose now that $n(\Gamma_1,\Gamma_2)>0$. We distinguish
two cases.\vspace{3mm}

\noindent (1) \emph{Assume that $T(\Gamma_1)\neq T(\Gamma_2)$.}
Since also $\hat{T}(\Gamma_1)\neq\hat{T}(\Gamma_2)$, we can find an
edge $e_2\subset\left(T(\Gamma_2)\setminus\hat{T}(\Gamma_1)\right)$.
We now take the only simple closed curve $\ell$ contained in
$T(\Gamma_1)\cup e_2$; since $T(\Gamma_2)$ is a tree, we can find
$e_1\subset\ell$ with $\Gamma_2(e_1)\neq t$. Then we apply to
$\Gamma_1$ the move (\~{m}2) at $(e_1,e_2)$ along $C=\ell\cup e_2$,
reducing $n(\Gamma_1,\Gamma_2)$. \vspace{3mm}

\noindent (2) \emph{Assume now that $T(\Gamma_1)=T(\Gamma_2)$.} This
immediately implies that $n(\Gamma_1,\Gamma_2)$ is $2$. More
precisely, for $e_i=c(\Gamma_i)$ with $i=1,2$ we have
$\Gamma_1(e_2)=f=\Gamma_2(e_1)$, while any other edge is assigned
the same color by both $\Gamma_1$ and $\Gamma_2$.\vspace{2mm}

\noindent The proof is by induction on the distance $k$ between
$C(\Gamma_1)$ and $C(\Gamma_2)$. Namely, let $\ell$ be the shortest
path in $T(\Gamma_1)$ joining $C(\Gamma_1)$ to $C(\Gamma_2)$; we
define $\ell$ to be empty if $C(\Gamma_1)\cap C(\Gamma_2)$ contains
at least one edge. Note that if $C(\Gamma_1)\cap C(\Gamma_2)$
consists of a single vertex $v$ then $\ell$ coincides with $v$. The
distance $k$ is defined to be the simplicial length of $\ell$,
\emph{i.e.} the number of edges in $\ell$; by definition, the length
of the empty path is $-\infty$. \vspace{2mm}

\noindent \emph{Base of induction:} $k\leqslant 0$. Then
$C(\Gamma_1)\cap C(\Gamma_2)$ is a non-empty path in
$T(\Gamma_1)=T(\Gamma_2)$; let $v$ be an end vertex of this path
(note that we might have $C(\Gamma_1)\cap C(\Gamma_2)=v$). Denote by
$e^{(1)}$ (one of) the extremal edge(s) of $C(\Gamma_1)\setminus
C(\Gamma_2)$ incident to $v$, and by $e^{(2)}$ (one of) the extremal
edge(s) of $C(\Gamma_2)\setminus C(\Gamma_1)$ incident to $v$. Apply
now to $\Gamma_1$ the following sequence of moves:
\begin{itemize}
  \item (\~{m}2) at $(e_1,e^{(1)})$ along $C(\Gamma_1)$;
  \item (\~{m2}) at $(e_2,e^{(2)})$ along $C(\Gamma_2)$;
  \item (m1) at $(v,e^{(1)},e^{(2)})$;
  \item (\~{m}2) at $(e_2,e^{(2)})$ along $C(\Gamma_2)$;
  \item (\~{m}2) at $(e_1,e^{(1)})$ along $C(\Gamma_1)$.
\end{itemize}
It is then easy to see that the resulting coloring is in fact
$\Gamma_2$. It follows from Lemma~\ref{m2:along:cycle:lem} that all
of the above operations are compositions of admissible moves, whence
the conclusion in this case.\vspace{2mm}

\noindent \emph{Inductive step.} Suppose that $k>0$; then $\ell$
contains at least one edge. Let $w$ be the vertex at which $\ell$
meets $C(\Gamma_1)$, and let $T'$ be the connected component of
$T(\Gamma_1)\setminus w$ containing $\ell$; notice that $T'$ is not
compact. We claim that there exists an $f$-edge $e$ with one
endpoint in $T'$ and the other not belonging to $T'$ (but possibly
coinciding with $w$). Indeed, assume that such an edge does not
exist. Then all $f$-edges with one endpoint on $T'$ would have the
other endpoint on $T'$ as well, so the obvious compactification of
$\left(T'\cup\{f-\mbox{edges with an endpoint on }T'\}\cup
e_2\right)$ would be a graph with one vertex of valence $1$ and all
the other vertices of valence $4$, which is impossible.

Consider now an edge $e$ with the above property, and let $\ell'$ be
the simple path in $T(\Gamma_1)$ joining its endpoints. Observe that
$\ell'$ contains $w$ and has at least one edge in common with
$\ell$. Denote by $e^{(1)}$ an edge of $C(\Gamma_1)$ incident to $w$
and not contained in $\ell'$, and by $e^{(2)}$ the edge of $\ell$
incident to $w$; notice that $e^{(2)}$ is contained in $\ell'$.
Apply now to $\Gamma_1$ the following moves:
\begin{itemize}
  \item (\~{m}2) at $(e_1,e^{(1)})$ along $C(\Gamma_1)$;
  \item (\~{m}2) at $(e,e^{(2)})$ along $\ell'\cup e$;
  \item (m1) at $(w,e^{(1)},e^{(2)})$;
  \item (\~{m}2) at $(e,e^{(2)})$ along $\ell'\cup e$;
  \item (\~{m}2) at $(e_1,e^{(1)})$ along $C(\Gamma_1)$.
\end{itemize}
Denote the resulting coloring by $\Gamma_1'$. Observe that
$\Gamma_1'$ and $\Gamma_2$ are still related as in case (2), and
that the distance between $C(\Gamma_1')$ and $C(\Gamma_2)$ is
strictly less than that between $C(\Gamma_1)$ and $C(\Gamma_2)$, and
this concludes the inductive step.
\end{proof}

\begin{rem}
\emph{In what follows (Proposition~\ref{max:same:trn:related:prop})
we will only use a weaker form of
Proposition~\ref{min:Mom:2:min:Mon:prop}, namely the fact that given
$\Gamma_1$, $\Gamma_2$ one can use (m1) and (m2) to transform
$\Gamma_1$ into some $\Gamma_1'$ such that
$\hat{T}(\Gamma_1')=\hat{T}(\Gamma_2)$, but for the sake of
completeness we present the above stronger version.}
\end{rem}

\subsection{General Mom-subgraphs}

In this subsection we consider a more general instance of
Mom-subgraphs than minimal ones:

\begin{defn}
Let $G$ be a connected $4$-valent graph. A \emph{general
Mom-subgraph} $\Gamma$ in $G$ is a complete coloring of the edges of
$G$ by colors $\{t,c,f\}$ such that:
\begin{itemize}
  \item the set $T(\Gamma)$ of the edges of $G$ that have color $t$
  spans $G$;
  \item each connected component of $T(\Gamma)$ is a tree;
  \item each edge colored by $c$ has both endpoints on the same
  connected component of $T(\Gamma)$, and for each connected
  component of $T(\Gamma)$ there is precisely one $c$-colored edge
  with vertices on it.
\end{itemize}
\end{defn}
\noindent Note that the set of $c$-edges is in bijective
correspondence with the set of connected components of $T(\Gamma)$.

For the rest of this subsection we fix the following notation. If
$\Gamma$ is a general Mom-subgraph of some $G$ then:
\begin{itemize}
  \item $k_{\Gamma}$ is the number of connected components of
  $T(\Gamma)$;
  \item $T_i(\Gamma)$ with $i=1,\ldots,k_{\Gamma}$ are the connected
  components of $T(\Gamma)$;
  \item $e_i$ is the $c$-edge with the endpoints on $T_i(\Gamma)$;
  \item $C_i(\Gamma)$ is the unique simple cycle in $T_i(\Gamma)\cup e_i$.
\end{itemize}

\paragraph{Moves on general Mom-subgraphs} The definition of the
moves (m1), (m2) and of the composite move (\~{m}2) naturally
extends to the context of general Mom-subgraphs. The extension is
\emph{verbatim} for (m1), whereas for (m2) and (\~{m}2) we add the
requirement that both endpoints of the edge $e'$ participating in
the move belong to the same component $T_i(\Gamma)$. In addition to
these, we will also consider three new moves:
\begin{enumerate}
  \item[(m2$'$)] let $e$ be an $f$-edge with one endpoint on
  $T_i(\Gamma)$ and the other on $T_j(\Gamma)\setminus C_j(\Gamma)$
  with $i\neq j$, and let $e'\subset T_j(\Gamma)\setminus C_j(\Gamma)$ be an edge
  incident to $e$ at a vertex $v$. Assume furthermore that $v$ and
  $C_j(\Gamma)$ belong to different connected components of
  $\left(T_j(\Gamma)\cup C_j(\Gamma)\right)\setminus\Int(e')$.
  Then switch the colors of $e$ and $e'$;
  \item[(m3)] let $e$ be an $f$-edge with one endpoint on
  $T_i(\Gamma)$ incident to some $e_j$ with $j\neq i$ (recall that $e_j$
  is a $c$-edge). Then assign color $t$ to $e$ and color $f$ to $e_j$;
  \item[(\={m}3)] let $e$ be an edge in $T_i(\Gamma)$, and let $e'$
  be an $f$-edge incident to $e$ and such that both endpoints of
  $e'$ belong to the same connected component of
  $T_i(\Gamma)\setminus\Int(e)$ and this component is not incident to the $c$-edge
  $e_i$. Then assign color $f$ to $e$ and
  color $c$ to $e'$.
\end{enumerate}
The moves are shown in Fig.~\ref{moves:m3:fig}. We will say that
(m2$'$) and (\={m}3) are applied at $(e,e')$ and that (m3) is
applied at $(e,e_j)$.
    \begin{figure}
    \begin{center}
    \includegraphics[scale=0.5]{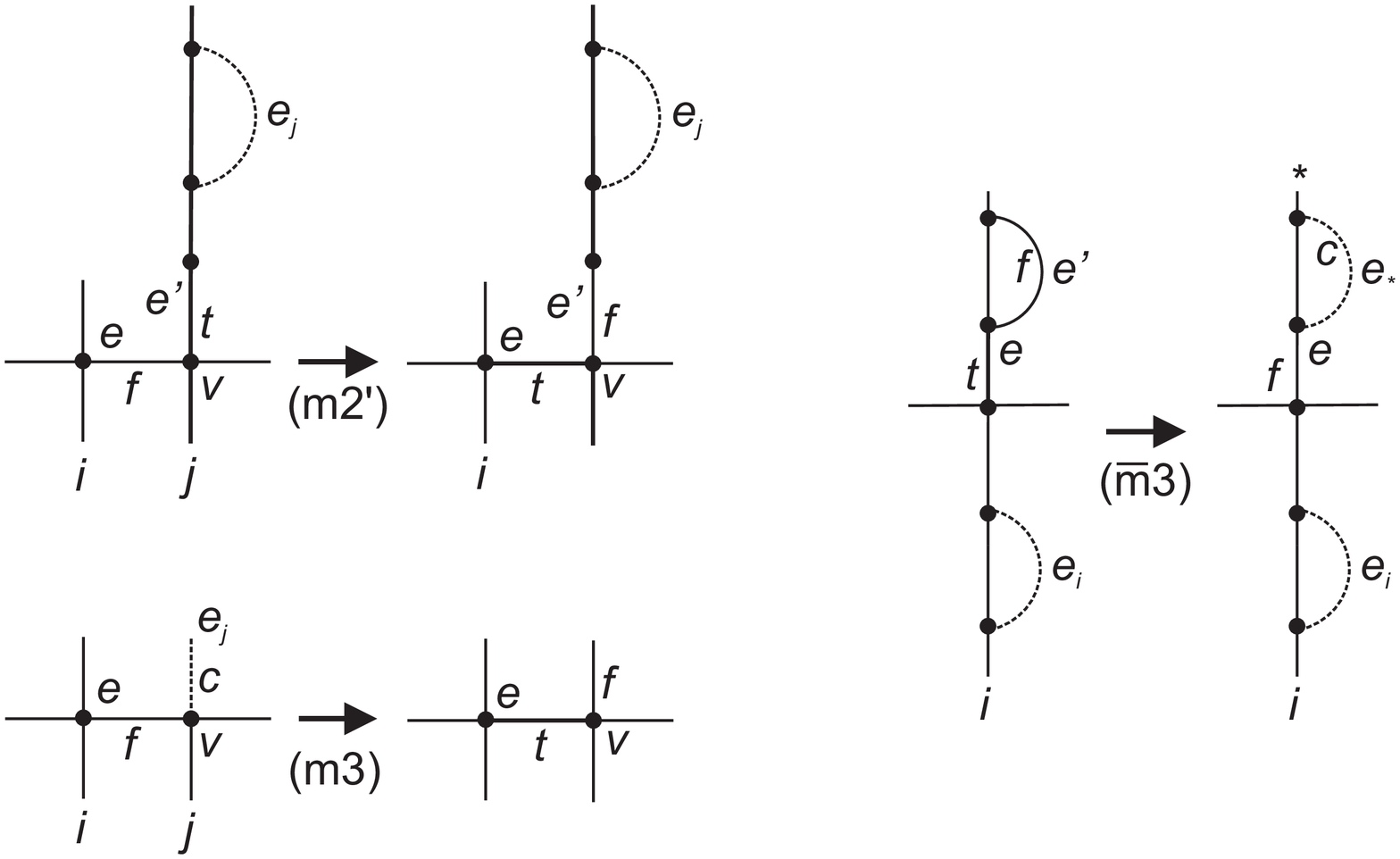}
    \mycap{The moves (m2$'$), (m3), and (\={m}3) on general Mom-subgraphs.}
    \label{moves:m3:fig}
    \end{center}
    \end{figure}
The next result is now readily established.

\begin{lemma}
We have the following:
\begin{enumerate}
  \item The moves (m2$'$), (m3), and (\={m}3) transform any general
  Mom-subgraph into a general Mom-subgraph.
  \item The inverse of a move (m2$'$) is a move (m2$'$).
  \item The inverse of a move (m3) is a move (\={m}3) and vice versa.
\end{enumerate}
\end{lemma}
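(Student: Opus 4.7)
The plan is to verify the three assertions by direct local inspection, since each move modifies only one or two tree components and the conditions defining a general Mom-subgraph are themselves local; no conceptual obstacle is expected, and the main task is simply to track which edges become which colors and to match the resulting data against the definition.

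For part~(1) I would examine each move in turn. In (m2$'$) the hypothesis that $v$ and $C_j(\Gamma)$ are separated in $(T_j\cup C_j)\setminus\Int(e')$ splits $T_j\setminus\Int(e')$ into subtrees $T_j^v$ (containing $v$) and $T_j^{v'}$ (containing $e_j$ and the whole cycle $C_j$); switching colors amalgamates $T_i$ with $T_j^v$ through $e$ to form a new tree supporting the $c$-edge $e_i$, and leaves $T_j^{v'}$ as a tree supporting $e_j$. In (m3), adding $e$ as a $t$-edge merges $T_i$ and $T_j$ into a single tree whose sole surviving $c$-edge is $e_i$. In (\={m}3), deleting $e$ from $T_i$ splits it into two subtrees, and the hypothesis that the $e'$-side avoids $e_i$ allows $e_i$ to remain the $c$-edge of the other side while $e'$ is promoted to the $c$-edge of the $e'$-side. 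In every case the three defining conditions of a general Mom-subgraph (spanning, forest-like components, exactly one $c$-edge per component with both endpoints on that component) then follow immediately.

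For part~(2), after performing (m2$'$) at $(e,e')$ the former $t$-edge $e'$ becomes an $f$-edge joining the two new components, while the former $f$-edge $e$ is now the unique bridge in the merged tree $T_i\cup T_j^v\cup\{e\}$ separating $v$ from the subtree containing $C_i$. I would then re-apply (m2$'$) with $e'$ playing the role of the $f$-edge and $e$ playing the role of the $t$-edge, noting that the required separation condition is automatic because $e$ is a cut edge between $v$ and $C_i$ in the merged tree; the second color switch restores the original coloring.

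For part~(3), the symmetry between (m3) and (\={m}3) is transparent: (m3) merges two components by promoting a bridging $f$-edge to a $t$-edge and demoting a $c$-edge to an $f$-edge, while (\={m}3) splits one component along a $t$-edge and promotes an incident $f$-edge to a new $c$-edge. I would simply check that the hypotheses of each move match under this exchange: after (m3), removing $e$ from the merged tree $T_i\cup T_j\cup\{e\}$ puts both endpoints of $e_j$ on the $T_j$-side and the $c$-edge $e_i$ entirely on the $T_i$-side, which is exactly the configuration required to apply (\={m}3) and recover the original Mom-subgraph; the converse direction is verified by the same bookkeeping.
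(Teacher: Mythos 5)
Your verification is correct: each of the three parts is exactly the local bookkeeping needed, and the one delicate point in part (1) for (m2$'$) --- that the separation hypothesis guarantees the amalgamated component is a tree carrying only the $c$-edge $e_i$ while the cycle-side remnant keeps $e_j$ --- is handled properly, as is the observation in part (2) that the separation condition for the reverse application is automatic. The paper states this lemma without proof (``readily established''), and your case-by-case check is precisely the routine argument it has in mind, so there is nothing to compare beyond noting that you have supplied the details the paper omits.
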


For brevity, we will refer to the moves (m1), (m2), (m2'), (m3),
(\={m}3) as to \emph{admissible moves}.

\paragraph{Relating general Mom-subgraphs by the moves}
We now prove the following technical result.

\begin{prop}\label{gen:Mom:2:min:Mom:prop}
Let $G$ be a connected $4$-valent graph. Then every general
Mom-subgraph $\Gamma$ of $G$ can be transformed to a minimal
Mom-subgraph of $G$ via a sequence of admissible moves.
\end{prop}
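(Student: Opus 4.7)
The plan is induction on $k = k_\Gamma$. The base case $k = 1$ is immediate, since $T(\Gamma)$ is then a single spanning tree and $\Gamma$ is already a minimal Mom-subgraph. For the inductive step I show that when $k \geq 2$ one can strictly decrease $k$ by a finite sequence of admissible moves. Among the listed moves, only (m3) decreases $k$ (it glues two $T$-components together by promoting a bridging $f$-edge to a $t$-edge while demoting a $c$-edge), so my task is to reach a configuration in which (m3) applies.

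Since $G$ is connected and every $c$-edge has both endpoints inside one $T$-component, there must be an $f$-edge $e$ joining two distinct components $T_i$ and $T_j$; let $v_j$ denote the endpoint of $e$ on $T_j$. I arrange (m3) via two successive maneuvers. First, I pull $v_j$ onto $C_j$: while $v_j \in T_j(\Gamma) \setminus C_j(\Gamma)$, let $e'$ be the unique edge of $T_j$ incident to $v_j$ on the path from $v_j$ to $C_j$; then $e'$ meets the hypotheses of (m2'), and (m2') at $(e,e')$ produces a new inter-component $f$-edge (namely $e'$) whose endpoint on the shrunken component $T_j'$ is one edge closer to $C_j$. Because (m2') only ever removes an edge of $T_j \setminus C_j$, the cycle $C_j$ is unchanged as a set of edges throughout, so the iteration is well-posed and terminates after finitely many steps with an inter-component $f$-edge whose $T_j$-side endpoint $v$ lies on $C_j$. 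Second, I rotate the $c$-edge around $C_j$: for any $t$-edge $f \subset C_j \setminus \{e_j\}$ that shares a vertex with $e_j$, the swap (m2) at $(f, e_j)$ is admissible (removing $f$ from $T_j$ and reinserting $e_j$ reconstitutes a spanning tree on the same vertex set of the component) and moves the color $c$ by one step along $C_j$ without altering $C_j$ as an edge set. After enough such swaps the $c$-edge sits at one of the two edges of $C_j$ incident to $v$, so the current inter-component $f$-edge is incident to the current $c$-edge; (m3) then applies, merges $T_i$ with $T_j$, and drops $k$ by one. The inductive hypothesis finishes the argument.

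The main obstacle is verifying that each move's admissibility hypotheses really hold at the moment it is applied and that each intermediate coloring remains a valid general Mom-subgraph. This decomposes into a handful of routine checks: that the piece of $T_j$ cut off by $e'$ never contains $C_j$ (so the connectivity condition of (m2') is met), that the cycle in the evolving component $T_j'$ coincides at every stage with the original $C_j$ as a set of edges (so subsequent (m2')'s and (m2)'s remain available), and that the swap $(f,e_j)$ preserves the tree structure of the host component. I expect no deeper difficulty beyond these verifications. I note also that the move (\={m}3), although listed as admissible, plays no role here since it can only increase $k$; it appears in the toolkit because it is the inverse of (m3) and will be needed when passing in the other direction (from minimal back to general) elsewhere.
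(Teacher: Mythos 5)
Your proof is correct and follows essentially the same route as the paper's: an outer induction on $k_\Gamma$ combined with an inner descent on the distance from a bridging $f$-edge to the cycle $C_j$, using (m2$'$) to shorten that distance, then repositioning the $c$-edge along $C_j$ (the paper packages your chain of (m2)-swaps as the single composite move (\~{m}2), already shown to be a composition of (m2)'s), and finally applying (m3) to merge two components. The only cosmetic differences are that the paper formalizes the inner step as an induction on the minimum $m_\Gamma$ over all bridging edges and treats the case where the bridging edge is already incident to the $c$-edge separately, both of which your iteration subsumes.
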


\begin{proof}
We proceed by induction on $k_{\Gamma}$. The base of the induction
is $k_{\Gamma}=1$, in which case $\Gamma$ is already a minimal
Mom-subgraph, so there is nothing to prove.

Suppose that $k_{\Gamma}>1$. Since $G$ is connected, there exists an
edge of $G$ joining two distinct connected components of
$T(\Gamma)$, say $T_1(\Gamma)$ and $T_2(\Gamma)$. For each such edge
$e$ we can define $m_i(e)$ as the minimal edge-length of a path in
$T_i(\Gamma)$ connecting the vertex $e\cap T_i(\Gamma)$ to
$C_i(\Gamma)$; if $e$ is incident to $C_i(\Gamma)$ then the path
consists of one vertex only and its length is $0$. Let
$m(e)=\min\{m_1(e),m_2(e)\}$, and let $m_{\Gamma}=\min\{m(e)\}$,
where the minimum is taken over all edges connecting $T_1(\Gamma)$
to $T_2(\Gamma)$. We will show by induction on $m_{\Gamma}$ that
there is a sequence of admissible moves transforming $\Gamma$ to a
general Mom-subgraph $\Gamma'$ such that
$k_{\Gamma'}=k_{\Gamma}-1$.\vspace{3mm}

\noindent \emph{Base of induction:} $m_{\Gamma}=0$. This implies
that there exists an edge $e$ that has an endpoint on at least one
of $C_1(\Gamma)$, $C_2(\Gamma)$. Up to change of notation, we can
assume that $e$ has an endpoint on $C_2(\Gamma)$.

If $e$ is incident to $e_2$ then the desired $\Gamma'$ is obtained
by applying (m3) at $(e,e_2)$. Otherwise, denote by $e'$ an edge of
$C_2(\Gamma)$ incident to $e$. Then $\Gamma'$ is obtained by
applying first the move (\~{m}2) at $(e',e_2)$ along $C_2(\Gamma)$
and then applying (m3) at $(e,e')$.\vspace{3mm}

\noindent \emph{Inductive step.} Suppose that $m_{\Gamma}>0$, and
let $e$ be an $f$-edge realizing $m_{\Gamma}$. Up to change of
notation we can assume that $m_{\Gamma}=m_2(e)$. Set $v_i=e\cap
T_i(\Gamma)$.

Let $\ell$ be the shortest path in $T_2(\Gamma)$ joining $v_2$ to
$C_2(\Gamma)$, and let $e'$ be the edge of $\ell$ incident to $v_2$.
Apply the move (m2$'$) at $(e,e')$ and denote the resulting general
Mom-subgraph by $\Gamma_1$. Notice that $k_{\Gamma_1}=k_{\Gamma}$;
let $T_1(\Gamma_1)$ be the connected component of $T(\Gamma_1)$
containing $T_1(\Gamma)$, and let $T_2(\Gamma_1)$ be the component
containing $C_2(\Gamma)$.

Observe now that $T_1(\Gamma_1)$ and $T_2(\Gamma_1)$ are still
joined by an $f$-edge (\emph{e.g.} $e'$). Moreover, $\ell\setminus
e'$ joins $e'\cap T_2(\Gamma_1)$ to $C_2(\Gamma)=C_2(\Gamma_1)$.
Hence $m_{\Gamma_1}<m_{\Gamma}$, which concludes both inductions.
\end{proof}

We can now easily establish the main result of this section.

\begin{cor}\label{gen:Mom:2:gen:Mom:cor}
Let $G$ be a connected $4$-valent graph. Then any two general
Mom-subgraphs in $G$ are related by a sequence of moves (m1), (m2),
(m2'), (m3), (\={m}3).
\end{cor}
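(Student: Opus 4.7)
The plan is to deduce the corollary by combining the two main propositions of the section, namely Proposition \ref{gen:Mom:2:min:Mom:prop} (any general Mom-subgraph reduces to a minimal one by admissible moves) with Proposition \ref{min:Mom:2:min:Mon:prop} (any two minimal Mom-subgraphs are related by moves (m1) and (m2)). The only real ingredient one needs beyond these is invertibility of every admissible move through admissible moves, which is recorded in the lemma preceding Proposition \ref{gen:Mom:2:min:Mom:prop}.

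Concretely, given two general Mom-subgraphs $\Gamma_1$ and $\Gamma_2$ of $G$, I would first apply Proposition \ref{gen:Mom:2:min:Mom:prop} twice to obtain two sequences of admissible moves
\[
\Gamma_1 = \Gamma_1^{(0)} \to \Gamma_1^{(1)} \to \cdots \to \Gamma_1^{(p)} = \Gamma_1^{\min},
\qquad
\Gamma_2 = \Gamma_2^{(0)} \to \Gamma_2^{(1)} \to \cdots \to \Gamma_2^{(q)} = \Gamma_2^{\min},
\]
where $\Gamma_1^{\min}$ and $\Gamma_2^{\min}$ are minimal Mom-subgraphs of $G$. Next, by Proposition \ref{min:Mom:2:min:Mon:prop} there is a sequence of moves (m1) and (m2) joining $\Gamma_1^{\min}$ to $\Gamma_2^{\min}$. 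It then suffices to reverse the second sequence above in order to transform $\Gamma_2^{\min}$ back into $\Gamma_2$ using admissible moves.

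For this last reversal to be legitimate I would invoke, one move at a time, the invertibility statements from the lemma: a move (m1) and a move (m2) are clearly their own inverses (performing the same color-swap twice restores the original coloring), a move (m2$'$) is its own inverse by part (2) of the lemma, and moves (m3) and ($\bar{\mbox{m}}$3) are mutual inverses by part (3) of the lemma. Consequently the reversed sequence $\Gamma_2^{\min} \to \Gamma_2^{(q-1)} \to \cdots \to \Gamma_2$ consists entirely of admissible moves, and the full concatenation
\[
\Gamma_1 \;\longrightarrow\; \Gamma_1^{\min} \;\longrightarrow\; \Gamma_2^{\min} \;\longrightarrow\; \Gamma_2
\]
provides the required sequence of moves from the list (m1), (m2), (m2$'$), (m3), ($\bar{\mbox{m}}$3).

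There is no genuinely hard step here; the content of the corollary is entirely concentrated in the two preceding propositions, and the only thing to be careful about is to check that each of the five moves we are allowed to use can be undone by another move from the same list, which is exactly what the invertibility lemma provides.
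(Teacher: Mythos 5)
Your proposal is correct and matches the paper's (implicit) argument: the corollary is stated without a written proof precisely because it follows by concatenating Proposition~\ref{gen:Mom:2:min:Mom:prop} applied to each structure, Proposition~\ref{min:Mom:2:min:Mon:prop} for the two resulting minimal Mom-subgraphs, and the reversal of one sequence via the invertibility statements in the lemma on the moves (m2$'$), (m3), (\={m}3). Your check that each of the five moves is undone by a move from the same list is exactly the only point requiring care, and you have handled it correctly.
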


\section{Moves on protoMom-structures}\label{moves:main:sec}

In this section we will state and prove the main result of the
paper. In the first subsection we introduce two types of
combinatorial moves on weak protoMom-structures, and in the second
subsection we show that these moves are sufficient to relate to each
other any two weak protoMom-structures internal on the same
manifold.

\subsection{Description of the moves}\label{moves:Mom:subsec}

The moves that we describe can be broken down into two types, that
we call \emph{M-moves} and \emph{C-moves}. The meaning of the latter
is quite clear, as they correspond to so-called elementary collapses
and their inverses. The former moves are inspired by the procedure
of obtaining a protoMom-structure from an ideal triangulation (see
Section~\ref{trn:2:protomom:subsec}); we describe this origin below
in Remark~\ref{origin:M-move:rem}.

For all the moves considered we introduce a natural notion of
\emph{admissibility} by saying that \emph{a move is admissible if it
transforms a weak protoMom-structure into a weak
protoMom-structure.} It will be easy to see from the definition of
the moves that this is equivalent to requiring the moves to keep the
lateral boundary toral.

\paragraph{M-moves} The precise definition of an M-move is the
following one.

\begin{defn}\label{m-move:def}
Let $(M,\Sigma,\Delta)$ be a weak protoMom-structure internal on a
compact connected orientable $3$-manifold $N$ with $\partial
N=\Sigma$. Let $H$ be a $1$-handle of $\Delta$ of valence at least
$1$, and let $\alpha$ be a $2$-handle incident to $H$. Let
$\ell\subset\partial M$ be a closed embedded curve such that:
\begin{enumerate}
  \item[(a)] $\ell$ is disjoint from the $2$-handles of $\Delta$;
  \item[(b)] $\ell$ passes exactly $3$ times along $1$-handles,
  counting with multiplicity;
  \item[(c)] $\ell$ passes along $\partial H$, and the attaching curves of the $2$-handles
  different from $\alpha$ do not separate $\ell$ from the attaching curve of $\alpha$ on
  $\partial H$; and
  \item[(d)] $\ell$ bounds a disc in the complement of $M$ in $N$.
\end{enumerate}
Then the \emph{M-move at $(H,\alpha)$ along $\ell$} consists in
removing the $2$-handle $\alpha$ and gluing another $2$-handle along
$\ell$ instead.
\end{defn}
The actual shape of an M-move may vary depending on how the curve
$\ell$ and the boundary annulus of $\alpha$ pass along the various
$1$-handles (although the number of possibilities is obviously
limited). In Figg.~\ref{move:M:fig} and~\ref{move:M:bis:fig}
    \begin{figure}
    \begin{center}
    \includegraphics[scale=0.5]{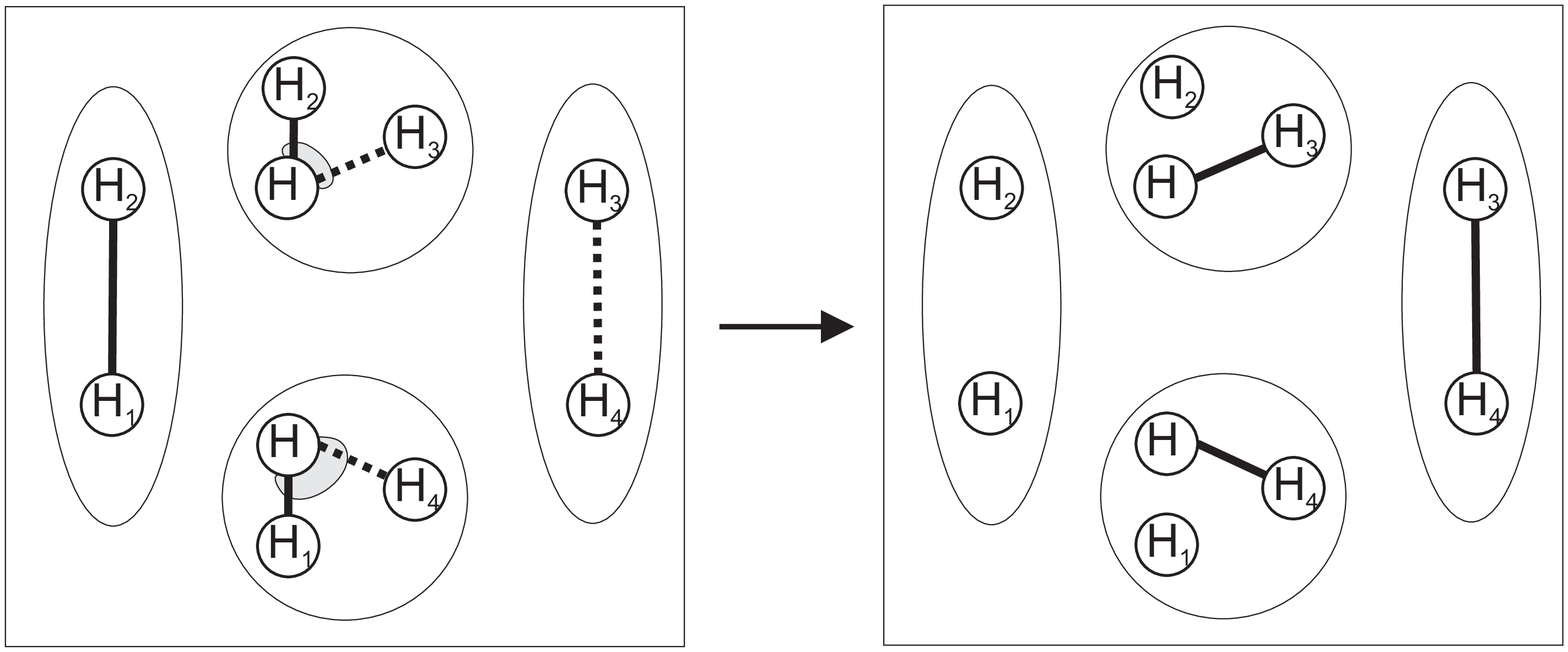}
    \mycap{An example of an M-move; on the left, the dotted line indicates segments of the curve $\ell$ and
    the thick line indicates segments of the core of the boundary annulus of $\alpha$. The areas shown in grey
    do not intersect any bridge other than that contained in $\alpha$, and the ovals indicate
    portions of lakes (which actually may or may not belong to the same lake). On the right the thick line
    indicates segments of the core of the boundary annulus of the $2$-handle replacing $\alpha$.}
    \label{move:M:fig}
    \end{center}
    \end{figure}
        \begin{figure}
    \begin{center}
    \includegraphics[scale=0.5]{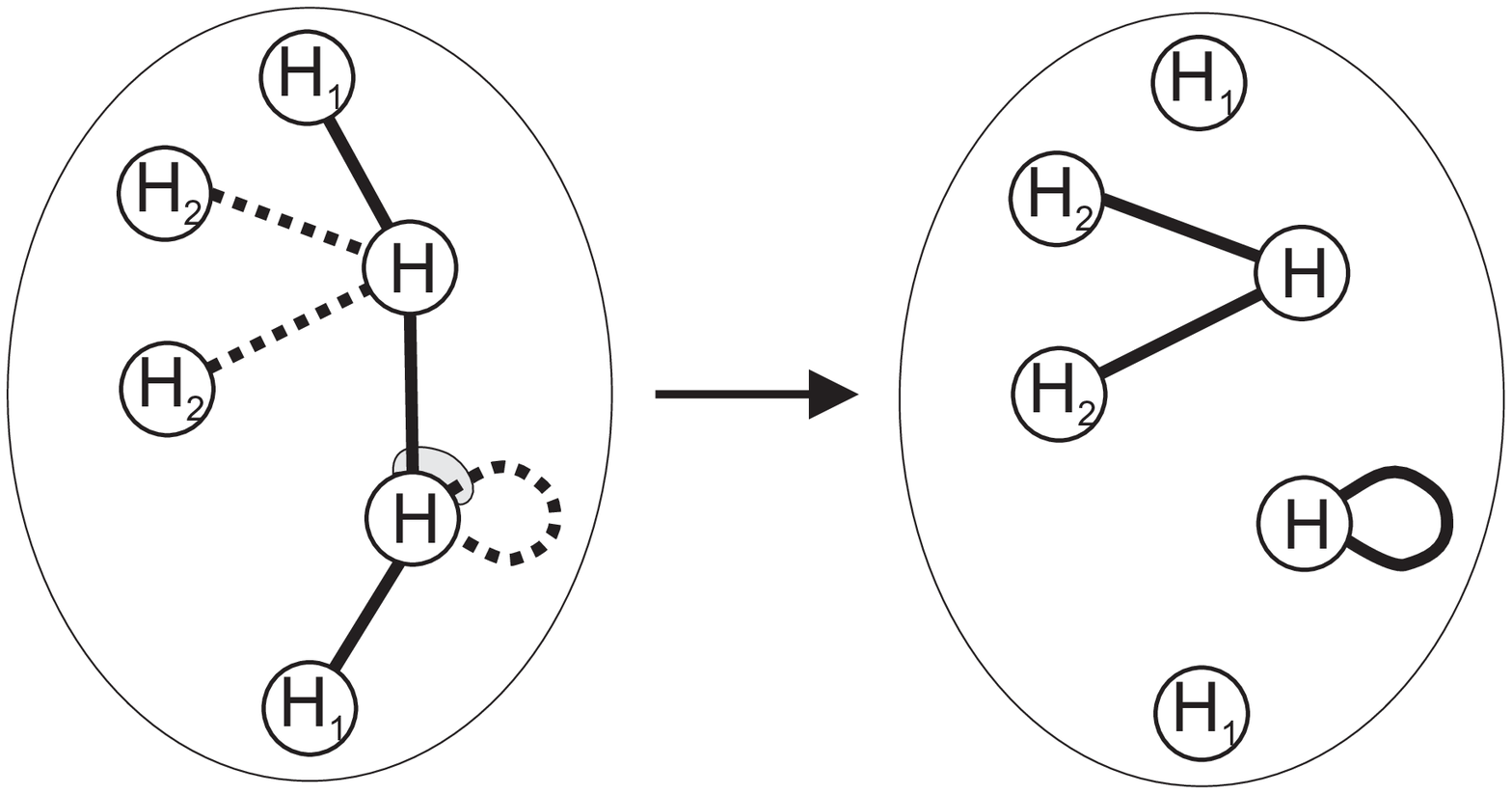}
    \mycap{Another example of an M-move; see the explanation for the previous figure. Note that
    in this case only one lake is involved in the move.}
    \label{move:M:bis:fig}
    \end{center}
    \end{figure}
we show two specific examples of an M-move.

\begin{rem}\label{origin:M-move:rem}
\emph{The requirements for the curve $\ell$ defining the move may
seem rather stringent, so at first glance the question of existence
of at least one curve with such properties may appear non-trivial.
Note however that the idea of the move comes from considering
triangulation-induced protoMom-structures, in particular those
obtained by removing some $2$-handles from the thickening of the
triangulation and keeping all $1$-handles; if we have such a
protoMom-structure then every tetrahedron such that one of its faces
is removed and another one is kept, provides a situation exactly as
described in the definition of an M-move.}
\end{rem}

\begin{rem}
\emph{An M-move is not automatically admissible. However, it follows
immediately from the definition that it is so if and only if after
its application the boundary still consists of tori.}
\end{rem}

\begin{rem}
\emph{It is clear from the definition that the (admissible) M-move
performed at $(H,\alpha)$ along $\ell$ is invertible with the
(admissible) inverse of the same type. More precisely, the inverse
of this move is the M-move performed at $(H,\alpha')$ along
$\ell_{\alpha}$, where $\alpha'$ is the $2$-handle inserted by the
initial move and $\ell_{\alpha}$ is the gluing curve of $\alpha$
(\emph{i.e.} the core circle of its boundary annulus).}
\end{rem}

\paragraph{C-moves} The second class of moves consists of
elementary collapses and their inverses, where by an
\emph{elementary collapse} we mean the removal of a $1$-handle of
valence $1$ together with the $2$-handle incident to it. The precise
description of these moves is as follows.

\begin{defn}
Let $(M,\Sigma,\Delta)$ be a weak protoMom-structure internal on a
compact connected orientable $3$-manifold $N$ with $\partial
N=\Sigma$, and let $\ell\subset\left(\partial
M\setminus\Sigma\right)$ be an embedded arc such that:
\begin{enumerate}
  \item[(a)] the endpoints of $\ell$ are contained in the union of
  the islands;
  \item[(b)] $\ell$ intersects the union of the $1$-handles along
  precisely two segments and the union of the lakes along precisely one
  segment;
  \item[(c)] $\ell$ is disjoint from the $2$-handles of $\Delta$.
\end{enumerate}
Then the \emph{C-move along $\ell$} consists in first adding a
$1$-handle $H$ contained in $N$ and parallel to $\partial M$ and
with bases in the immediate vicinity of the endpoints of $\ell$,
then completing $\ell$ to a closed curve $\ell'$ that passes
precisely once along $H$ and is disjoint from any $2$-handles, and
finally gluing a new $2$-handle along $\ell'$.
\end{defn}
The C-move has three distinct shapes that are shown in
Figg.~\ref{move:C1:fig},~\ref{move:C2:fig}, and~\ref{move:C3:fig}.
    \begin{figure}
    \begin{center}
    \includegraphics[scale=0.5]{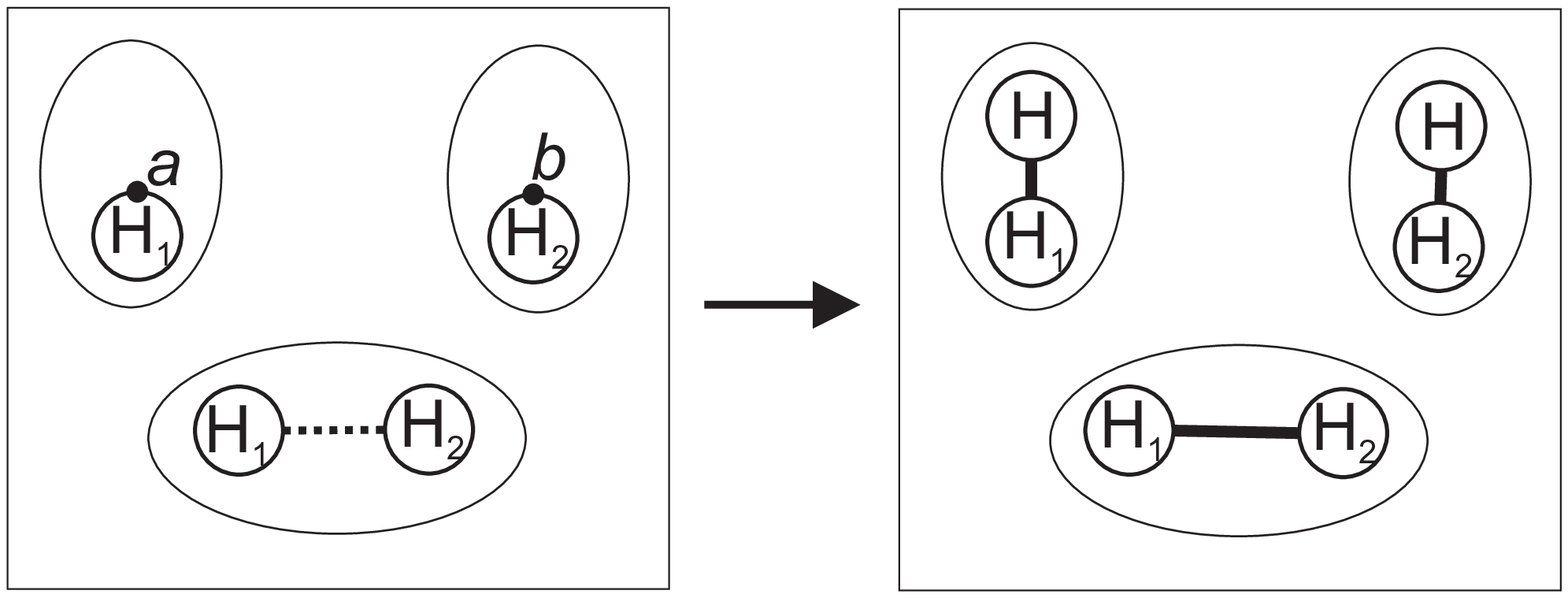}
    \mycap{The first type of the C-move; $a$ and $b$ denote the endpoints of $\ell$.}
    \label{move:C1:fig}
    \end{center}
    \end{figure}
    \begin{figure}
    \begin{center}
    \includegraphics[scale=0.5]{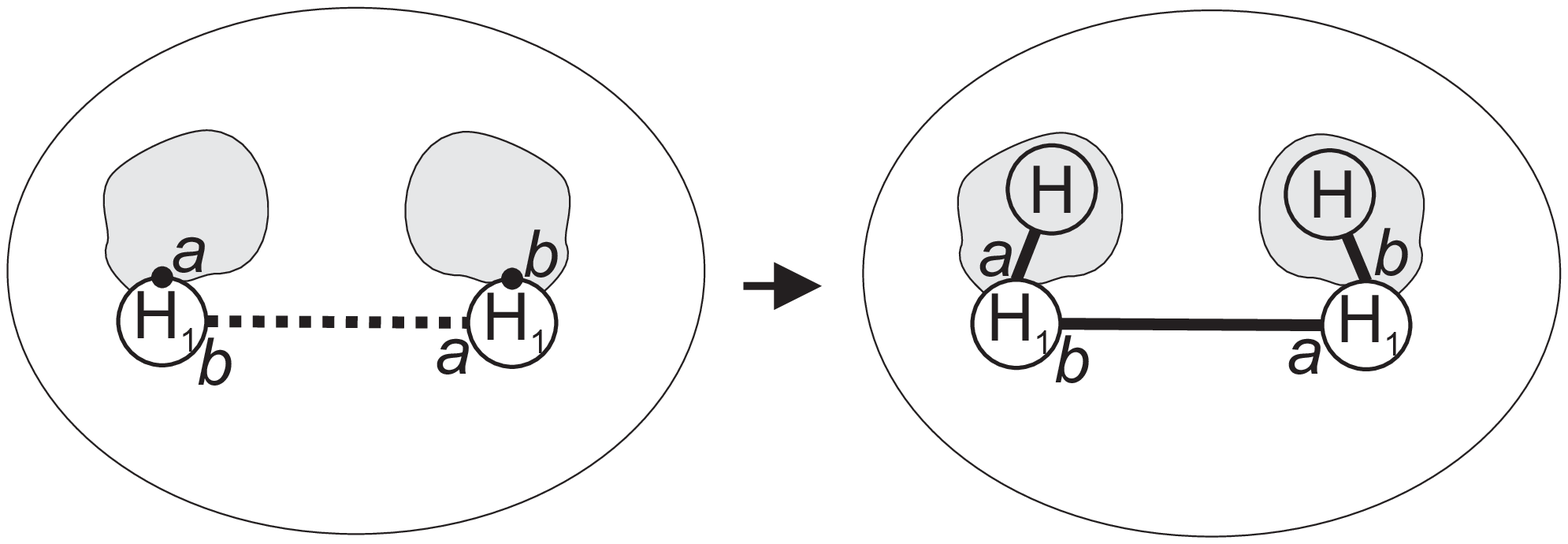}
    \mycap{The second type of the C-move; $a$ and $b$ indicate how the segment passes along the $1$-handle $H_1$.}
    \label{move:C2:fig}
    \end{center}
    \end{figure}
    \begin{figure}
    \begin{center}
    \includegraphics[scale=0.5]{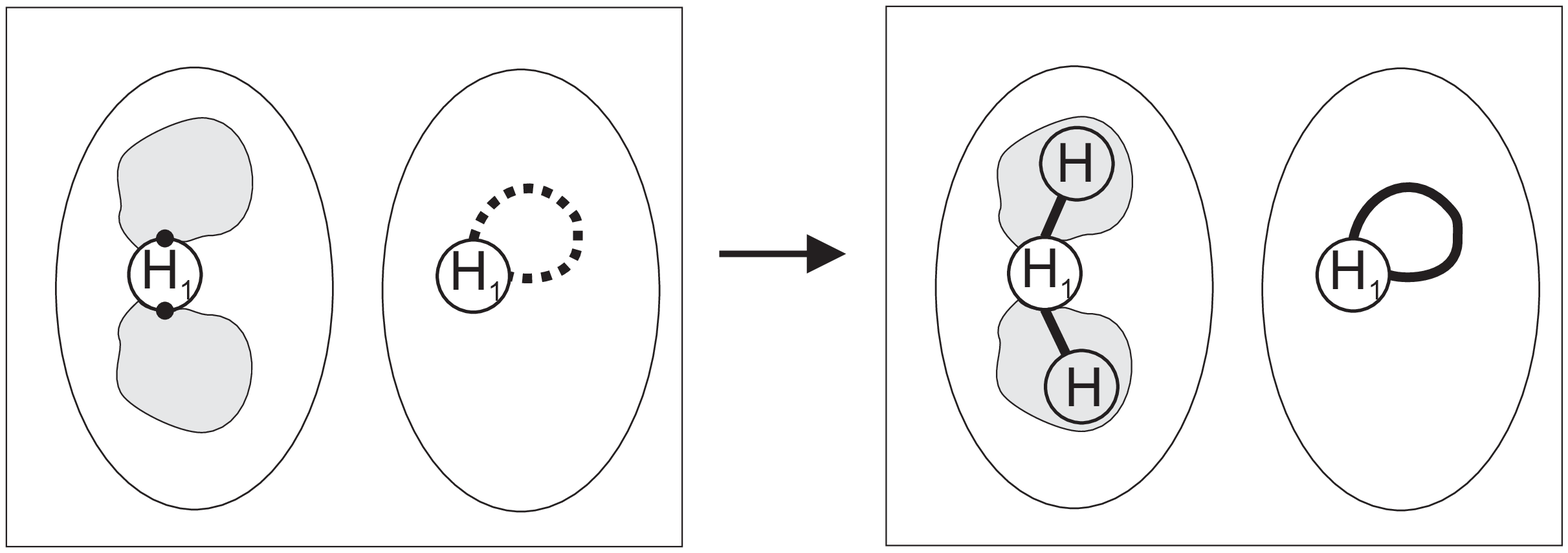}
    \mycap{The third type of the C-move.}
    \label{move:C3:fig}
    \end{center}
    \end{figure}

It is clear that any C-move is automatically admissible. It is also
invertible, with the inverse being a suitable elementary collapse,
and we call any of the latter a C$^{-1}$-move. We will typically
refer to all C$^{\pm 1}$-moves as simply to C-moves.

\subsection{Relating protoMom-structures by the moves}

We are now ready to state the main result of the paper.

\begin{thm}\label{main:thm}
Let $N$ be a compact connected orientable $3$-manifold such that
$\partial N$ is a non-empty surface without spherical components.
Then any two full weak protoMom-structures internal on $N$ are
related by a sequence of admissible M- and C-moves.
\end{thm}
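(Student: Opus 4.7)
The plan is to reduce the statement to the combinatorial tools developed in Sections~\ref{trn:e:mom:sec} and~\ref{mom:subgraph:sec}. By Theorem~\ref{int:protoMom:2:trn:thm}, each of the two given protoMom-structures $(M_1,\Sigma,\Delta_1)$ and $(M_2,\Sigma,\Delta_2)$ is $\tau_i$-induced for some ideal triangulation $\tau_i$ of $N$ (for $i=1,2$). The natural strategy is therefore to split the argument into two cases: the case $\tau_1=\tau_2$ and the case in which the triangulations differ, with the latter to be reduced to the former by means of a standard set of moves between ideal triangulations.

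First I would handle the case $\tau_1=\tau_2=\tau$. Here I plan to exploit the announced correspondence (Lemma~\ref{max:e:mom:lem}) between \emph{maximal} $\tau$-induced protoMom-structures and general Mom-subgraphs of the $4$-valent singular graph $S(P_\tau)$ of the special spine dual to $\tau$. Any $\tau$-induced protoMom-structure can be transformed by a finite sequence of $\mathrm{C}^{-1}$-moves (creations of $1$-handles corresponding to previously-discarded edges of $\tau$ together with the accompanying $2$-handles) into a maximal one; this reduces the problem to relating two maximal $\tau$-induced structures. I would then translate each of the admissible moves (m1), (m2), (m2$'$), (m3), (\={m}3) on Mom-subgraphs into an explicit sequence of admissible M-moves between the corresponding maximal protoMom-structures; Corollary~\ref{gen:Mom:2:gen:Mom:cor} then yields a sequence of admissible moves connecting any two maximal $\tau$-induced protoMom-structures, and hence, via the reduction, any two $\tau$-induced ones.

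For the general case $\tau_1\neq\tau_2$ I would appeal to the Matveev--Piergallini theorem: any two ideal triangulations of a fixed $3$-manifold are connected by a finite sequence of $2\leftrightarrow 3$ moves (together with $0\leftrightarrow 2$ bubble moves, if necessary). The core task is to realize each such local triangulation move at the level of maximal triangulation-induced protoMom-structures by an explicit short sequence of admissible M- and C-moves: for a $2\to 3$ move one inserts the new edge as a $1$-handle via a $\mathrm{C}^{-1}$-move and rearranges the affected $2$-handles through an M-move along a curve lying on the boundary of the involved local region. Concatenating these local replacements with the same-triangulation argument applied at each intermediate triangulation produces a sequence of admissible moves between the original protoMom-structures.

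The main obstacle I anticipate is precisely the local translation of a $2\leftrightarrow 3$ move into admissible M- and C-moves: at each stage one must check that a curve $\ell$ satisfying all four conditions of Definition~\ref{m-move:def} is available in the boundary of the current handle decomposition, and, crucially, that the resulting intermediate structures have only toric lateral boundary so that admissibility (in the sense discussed after the definition of M-move) is preserved throughout. A careful case analysis of the possible local configurations of discarded/kept faces around the tetrahedra involved in the $2\leftrightarrow 3$ move should yield the required realizations; once these local models have been checked, the global statement follows by induction on the number of Matveev--Piergallini moves needed to connect $\tau_1$ and $\tau_2$.
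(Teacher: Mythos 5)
Your proposal follows essentially the same route as the paper: reduce both structures to $\tau$-maximal ones via C-moves, relate $\tau$-maximal structures through the duality with general Mom-subgraphs and Corollary~\ref{gen:Mom:2:gen:Mom:cor}, and bridge different triangulations via the Matveev--Piergallini $(2\to3)$-moves. The only notable simplification in the paper's treatment of the step you flag as the main obstacle is that a $\tau_1$-maximal structure omitting the $2$-handle over the destroyed face is automatically $\tau_2$-induced, so a single C-move (no M-moves or case analysis) realizes the $(2\to3)$-move; also note that in the paper's conventions the handle \emph{insertions} you describe are C-moves, not C$^{-1}$-moves.
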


The proof of this result is based on the statement of
Theorem~\ref{int:protoMom:2:trn:thm} and on the construction used to
prove it, and on Corollary~\ref{gen:Mom:2:gen:Mom:cor}. We start by
introducing some additional notions and proving some easy facts. For
the rest of the section we fix an ambient manifold $N$ as in
Theorem~\ref{main:thm}.

\paragraph{General Mom-subgraphs and protoMom-structures} Let $\tau$
be an ideal triangulation of $N$; then, as described in
Section~\ref{prelim:sec}, dual to $\tau$ there is a special spine
$P_{\tau}$ of $N$. Note that any general Mom-subgraph $\Gamma$ on
$S(P_{\tau})$, which is a $4$-valent graph, defines a weak
protoMom-structure $(M_{\Gamma},\Sigma,\Delta_{\Gamma})$ internal on
$N$: the $1$-handles of $\Delta_{\Gamma}$ are thickenings of the
edges of $\tau$, and the $2$-handles of $\Delta_{\Gamma}$ are
thickenings of the faces of $\tau$ dual to the edges of
$S(P_{\tau})$ having color $f$ in $\Gamma$. We will call
$(M_{\Gamma},\Sigma,\Delta_{\Gamma})$ the protoMom-structure
\emph{dual} to $\Gamma$. This duality extends to the moves:

\begin{prop}\label{moves:graph:and:str:prop}
Let $\tau$, $\Gamma$, and $(M_{\Gamma},\Sigma,\Delta_{\Gamma})$ be
as above, and let $\Gamma'$ be obtained from $\Gamma$ by an
admissible move of type (m1), (m2), (m2'), (m3), or (\={m}3). Then:
\begin{enumerate}
  \item If the move is of type (m1), (m2'), (m3), or (\={m}3)
  then the dual structure $(M_{\Gamma'},\Sigma,\Delta_{\Gamma'})$ is obtained from
  $(M_{\Gamma},\Sigma,\Delta_{\Gamma})$ by an admissible M-move.
  \item If the move is of type (m2) then either
  $(M_{\Gamma'},\Sigma,\Delta_{\Gamma'})$ coincides with
  $(M_{\Gamma},\Sigma,\Delta_{\Gamma})$ or it is obtained from
  $(M_{\Gamma},\Sigma,\Delta_{\Gamma})$ by an admissible M-move.
\end{enumerate}
\end{prop}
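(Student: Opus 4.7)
The plan is to track the set of $f$-colored edges, since for a fixed $\tau$ every $\tau$-induced structure has the same $1$-handles (thickenings of all edges of $\tau$), and so $\Delta_\Gamma$ depends on $\Gamma$ only through this $f$-set, which identifies the faces of $\tau$ that are thickened into $2$-handles of $\Delta_\Gamma$. Thus the whole proposition reduces to bookkeeping how each move modifies this $f$-set, together with a geometric interpretation of each modification.

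For each of the moves (m1), (m2$'$), (m3), and (\={m}3), a direct inspection of the definitions shows that the $f$-set changes by removing a single edge $e_{\mathrm{out}}$ and adding a single edge $e_{\mathrm{in}}$, and in each case $e_{\mathrm{out}}$ and $e_{\mathrm{in}}$ share a vertex $v$ of $S(P_\tau)$. For (m2) there are two subcases: if the non-tree edge $e'$ has color $c$, the $f$-set is left unchanged and the dual structures literally coincide (this is the alternative in conclusion~2); if instead $e'$ has color $f$, then $e$ and $e'$ again execute a single $f$-swap between two edges sharing a vertex.

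It remains to interpret such an $f$-swap as an admissible M-move. The shared vertex $v$ is dual to a tetrahedron $\Delta_3$ of $\tau$, and $e_{\mathrm{out}}, e_{\mathrm{in}}$ are dual to two triangular faces $F_{\mathrm{out}}, F_{\mathrm{in}}$ of $\Delta_3$ meeting along a unique edge $e^*\subset\Delta_3$. I would take $H$ to be the $1$-handle thickening $e^*$, $\alpha$ to be the (removed) $2$-handle thickening $F_{\mathrm{out}}$, and $\ell$ the attaching curve of the (inserted) thickening of $F_{\mathrm{in}}$. Conditions (a), (b), (d) of Definition~\ref{m-move:def} are immediate: $\ell$ is disjoint from the other $2$-handles because the remaining faces of $\tau$ meet $F_{\mathrm{in}}$ only along its boundary edges; $\ell$ traverses exactly three $1$-handles because $F_{\mathrm{in}}$ is a triangle; and $\ell$ bounds the disc $F_{\mathrm{in}}$ itself, which after removal of $\alpha$ lies entirely in $N\setminus M_{\Gamma'}$. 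Condition~(c) requires the bridges on the island of $H$ for $F_{\mathrm{in}}$ and $F_{\mathrm{out}}$ to be consecutive in the cyclic order of faces of $\tau$ around $e^*$, and this holds because $F_{\mathrm{in}}$ and $F_{\mathrm{out}}$ are precisely the two faces of the \emph{same} tetrahedron $\Delta_3$ meeting at $e^*$, so no other face of $\tau$ is interposed between them.

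Admissibility of the resulting M-move is then automatic: since $\Gamma$ and $\Gamma'$ are both general Mom-subgraphs, the remark preceding the proposition guarantees that $(M_\Gamma,\Sigma,\Delta_\Gamma)$ and $(M_{\Gamma'},\Sigma,\Delta_{\Gamma'})$ are both weak protoMom-structures, hence in particular have toric lateral boundary. The main obstacle I anticipate is the careful verification of condition~(c): although it rests on the elementary geometric observation that two faces of a tetrahedron sharing a triangulation edge sit consecutively in the cyclic order of faces around that edge, matching this correspondence precisely to the local combinatorics of $S(P_\tau)$ at $v$ (and checking that in the various shapes of M-moves the boundary annulus of $\alpha$ and the new curve $\ell$ really do meet $\partial H$ in the prescribed configuration) is the most delicate step of the argument.
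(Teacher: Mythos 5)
The paper states this proposition without proof, treating it as immediate from the duality and from Remark~\ref{origin:M-move:rem}; your argument is correct and is essentially the intended one: each admissible move swaps at most one $f$-edge for an adjacent one (with the $e'$-coloured-$c$ subcase of (m2) leaving the $f$-set, hence the dual structure, unchanged), and the two faces of the common tetrahedron dual to the swapped edges meet along a single edge $e^*$ of $\tau$, giving exactly the configuration $(H,\alpha,\ell)$ of Definition~\ref{m-move:def}, with condition (c) holding because the two face-germs are consecutive in the cyclic order around $e^*$ and admissibility following because $\Gamma'$ is again a general Mom-subgraph. The only slip is in your verification of condition (d): the disc bounded by $\ell$ (a cocore-parallel copy of the truncated triangle $F_{\mathrm{in}}$) must lie in $N\setminus M_{\Gamma}$, not $N\setminus M_{\Gamma'}$, and it does so without any removal of $\alpha$, since the thickenings of the distinct faces $F_{\mathrm{in}}$ and $F_{\mathrm{out}}$ are disjoint.
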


\paragraph{Weak protoMom-structures maximal with respect to a
triangulation} We now present an alternative way to describe the set
of protoMom-structures obtained by taking the duals of all
Mom-subgraphs in $S(P_{\tau})$, where $\tau$ is a fixed ideal
triangulation of $N$.

\begin{defn}\label{trn:max:str:def}
We say that a $\tau$-induced weak protoMom-structure
$(M,\Sigma,\Delta)$ is \emph{$\tau$-maximal} if it is not properly
contained in any other $\tau$-induced protoMom-structure on $N$.
\end{defn}

\begin{rem}
\emph{The above definition is equivalent to requiring the set of the
$1$-handles of $(M,\Sigma,\Delta)$ to coincide with the set of
thickenings of \emph{all} the edges of $\tau$.}
\end{rem}

\begin{rem}
\emph{When speaking about $\tau$-maximal protoMom-structures, we
implicitly fix for each edge or triangle of $\tau$ a specific
thickening of it, used then for all $\tau$-induced
protoMom-structures.}
\end{rem}

\begin{lemma}\label{max:e:mom:lem}
Let $\tau$ be an ideal triangulation of $N$. Then the set of
$\tau$-maximal weak protoMom structures on $N$ is precisely the set
of weak protoMom-structures dual to general Mom-subgraphs of
$S(P_{\tau})$.
\end{lemma}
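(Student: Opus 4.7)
The plan is to interpret a $\tau$-maximal weak protoMom-structure $(M,\Sigma,\Delta)$ as being specified by a subset $E_f$ of edges of $S(P_{\tau})$, namely those edges dual to the faces of $\tau$ that are thickened into $2$-handles (recall that the set of $1$-handles is fixed as the thickenings of all edges of $\tau$). A general Mom-subgraph $\Gamma$ then determines such a structure by taking $E_f$ to be its set of $f$-colored edges, so the lemma reduces to showing that $E_f$ arises from some general Mom-subgraph if and only if the corresponding decomposition of $M$ is indeed a weak protoMom-structure.

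The heart of the argument is a local topological analysis of $N \setminus M$. Since $M$ is the union of the collar of $\Sigma$, all $1$-handles, and the $2$-handles indexed by $E_f$, its complement in $N$ consists of a small ball $B_T$ at the center of each tetrahedron $T$ together with one slab $F \times [-\epsilon,\epsilon]$ per face $F$ whose dual edge lies outside $E_f$. Each such slab attaches, via its two free hexagonal ends $F \times \{\pm\epsilon\}$, to the balls $B_{T_1}$ and $B_{T_2}$ on the two sides of $F$ (possibly with $T_1 = T_2$), so that it acts as a $3$-dimensional $1$-handle. Consequently, the components of $N \setminus M$ are in bijection with the connected components of the subgraph $G' \subseteq S(P_{\tau})$ whose edges are those outside $E_f$, and each component is a handlebody of genus $e-v+1$, where $v$ and $e$ count respectively the tetrahedra (vertices) and removed faces (edges) of the component.

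From this description the lemma follows at once. A general Mom-subgraph $\Gamma$ has each component of $T(\Gamma)$ a tree together with precisely one additional $c$-edge whose endpoints lie on that tree; hence each component of the corresponding $G'$ satisfies $e=v$, yielding handlebody genus $1$. The lateral boundary of $M_{\Gamma}$ is therefore a disjoint union of tori, so $(M_{\Gamma},\Sigma,\Delta_{\Gamma})$ is a bona fide $\tau$-maximal weak protoMom-structure. Conversely, for any $\tau$-maximal weak protoMom-structure the toral-boundary condition forces each component of $N \setminus M$ to be a solid torus, hence each component of $G'$ to be unicyclic ($e=v$); picking any spanning tree in each such component and coloring its edges $t$, and marking the unique remaining edge of that component $c$, produces a general Mom-subgraph whose dual protoMom-structure is precisely $(M,\Sigma,\Delta)$.

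The main technical point is the topological identification of $N \setminus M$ as a disjoint union of handlebodies encoded by $G'$, specifically the claim that each removed slab behaves as a $3$-dimensional $1$-handle attached to the balls $B_T$. This requires choosing the balls $B_T$ in the definition of $M'$ so that each slab meets the complement only along the two hexagonal disks of its free boundary; granted this, the Euler characteristic count $\chi = v - e$ per component, together with the identification $\chi = 1 - g$ for a handlebody of genus $g$, gives the formula $g = e - v + 1$ used above and the equivalence between the toral-boundary condition and the unicyclic condition on the components of $G'$.
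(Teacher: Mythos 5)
Your proof is correct and follows essentially the same route as the paper: both rest on the duality between the set of thickened faces and an edge-subset of $S(P_{\tau})$, together with the fact that the components of $N\setminus M$ are handlebodies whose genus is the first Betti number of the corresponding component of the complementary subgraph $G'$, so that the toral-boundary condition is equivalent to every such component being unicyclic (your write-up merely makes explicit the topological step the paper leaves implicit in the phrases ``no connected component of $x$-colored edges is contractible'' and ``hence we can find a general Mom-subgraph\dots''). The one point you assert rather than argue is the $\tau$-maximality of $(M_{\Gamma},\Sigma,\Delta_{\Gamma})$ in the forward direction, but this is immediate from your own setup: inserting any further $2$-handle deletes an edge from a unicyclic component of $G'$, producing a component with $e<v$ and hence a spherical boundary component, which is exactly the justification the paper gives.
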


\begin{proof}
A weak protoMom-structure dual to a Mom-subgraph is $\tau$-maximal
because it contains all $1$-handles, and insertion of some
$2$-handle would create sphere boundary components. Let now
$(M,\Sigma,\Delta)$ be a $\tau$-maximal weak protoMom-structure.
Consider the auxiliary coloring $\Gamma'$ of $S(P_{\tau})$ obtained
by assigning color $f$ to the edges dual to the $2$-handles that are
contained in $\Delta$, and a new color $x$ to all the other edges.
Since $\partial M$ has no spherical components, no connected
component of $x$-colored edges is contractible; furthermore,
$\partial M\setminus\Sigma$ consists of tori, hence we can find a
general Mom-subgraph $\Gamma$ such that the set of its $t$-colored
edges and $c$-colored edges is contained in the set of $x$-colored
edges (relative to $\Gamma'$). Then it is clear that
$(M,\Sigma,\Delta)$ is contained in
$(M_{\Gamma},\Sigma,\Delta_{\Gamma})$, and since $(M,\Sigma,\Delta)$
is $\tau$-maximal, they must actually coincide.
\end{proof}

Corollary~\ref{gen:Mom:2:gen:Mom:cor} and
Proposition~\ref{moves:graph:and:str:prop} now imply:

\begin{prop}\label{max:same:trn:related:prop}
Let $\tau$ be an ideal triangulation of $N$. Then any two
$\tau$-maximal weak protoMom-structures are related by a sequence of
admissible M-moves.
\end{prop}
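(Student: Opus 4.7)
The plan is to reduce the statement to the purely combinatorial Corollary~\ref{gen:Mom:2:gen:Mom:cor} via the dictionary between general Mom-subgraphs of $S(P_{\tau})$ and $\tau$-maximal weak protoMom-structures on $N$, a dictionary that is already established by Lemma~\ref{max:e:mom:lem} and Proposition~\ref{moves:graph:and:str:prop}. In other words, the whole proof should be a pure ``translation'' of the combinatorial statement, with no genuinely new geometric content.

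Concretely, I would start from two $\tau$-maximal weak protoMom-structures $(M_1,\Sigma,\Delta_1)$ and $(M_2,\Sigma,\Delta_2)$ on $N$ and apply Lemma~\ref{max:e:mom:lem} to realize each of them as the dual $(M_{\Gamma_i},\Sigma,\Delta_{\Gamma_i})$ of a general Mom-subgraph $\Gamma_i$ of the singular graph $S(P_{\tau})$, for $i=1,2$. I would then apply Corollary~\ref{gen:Mom:2:gen:Mom:cor} (componentwise if $S(P_{\tau})$ happens to be disconnected) to obtain a finite chain of general Mom-subgraphs $\Gamma_1=\Gamma^{(0)},\Gamma^{(1)},\ldots,\Gamma^{(n)}=\Gamma_2$ in which each $\Gamma^{(j+1)}$ differs from $\Gamma^{(j)}$ by a single move of type (m1), (m2), (m2'), (m3), or (\={m}3).

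The next step is to pass back to the protoMom-structure side by means of Proposition~\ref{moves:graph:and:str:prop}. Since every intermediate $\Gamma^{(j)}$ is a general Mom-subgraph, Lemma~\ref{max:e:mom:lem} guarantees that its dual $(M_{\Gamma^{(j)}},\Sigma,\Delta_{\Gamma^{(j)}})$ is automatically a $\tau$-maximal weak protoMom-structure on $N$; in particular the lateral boundary stays toral throughout, which is precisely the admissibility condition for the induced M-moves. Proposition~\ref{moves:graph:and:str:prop} then tells me that each transition $\Gamma^{(j)}\to\Gamma^{(j+1)}$ either leaves the dual protoMom-structure unchanged (the inert case permitted for (m2)) or modifies it by an admissible M-move, so concatenating the nontrivial steps produces the required sequence of admissible M-moves from $(M_1,\Sigma,\Delta_1)$ to $(M_2,\Sigma,\Delta_2)$.

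The only delicate point I anticipate is the bookkeeping of admissibility at every intermediate stage, i.e.\ making sure that no intermediate M-move ever creates a spherical boundary component or otherwise spoils the weak protoMom structure. Fortunately this is built into the setup: the intermediate colorings are always general Mom-subgraphs by construction, hence their duals are $\tau$-maximal weak protoMom-structures, and the admissibility required in the hypothesis of Proposition~\ref{moves:graph:and:str:prop} is obtained for free. Consequently, beyond the routine verification of this translation the argument requires no further work.
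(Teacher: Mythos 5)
Your proposal is correct and follows exactly the paper's argument: the paper derives this proposition immediately from Corollary~\ref{gen:Mom:2:gen:Mom:cor} and Proposition~\ref{moves:graph:and:str:prop}, using Lemma~\ref{max:e:mom:lem} to identify $\tau$-maximal structures with duals of general Mom-subgraphs of $S(P_{\tau})$, which is precisely your translation scheme. (Your parenthetical about $S(P_{\tau})$ being disconnected is unnecessary, since it is connected whenever $N$ is, but this does not affect the argument.)
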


\paragraph{Recovering a maximal protoMom-structure from an arbitrary
one} The last essential tool for the proof of Theorem~\ref{main:thm}
is the following:

\begin{lemma}\label{proto:2:max:lem}
Let $(M,\Sigma,\Delta)$ be a full weak protoMom-structure internal
on $N$, and let $\tau$ be the triangulation constructed in the proof
of Theorem~\ref{int:protoMom:2:trn:thm}. Then there exists a
sequence of C-moves transforming $(M,\Sigma,\Delta)$ into a
$\tau$-maximal weak protoMom structure.
\end{lemma}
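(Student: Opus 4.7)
My plan is to translate each elementary step in the construction of $\tau$ from $(M,\Sigma,\Delta)$ (as presented in the proofs of Theorem~\ref{int:protoMom:2:trn:thm} and Lemma~\ref{trn:tor:2:tor:lem}) into at most one C-move on the handle structure, so that after all such C-moves the result contains a $1$-handle for every edge of $\tau$ and is therefore $\tau$-maximal. The first observation is that the $1$-handles already present in $\Delta$ correspond to the edges of the induced triangulations $\delta'(T_i)$ of the lateral tori $T_i$, so the missing $1$-handles correspond exactly to those edges of $\tau$ introduced by the (\^{s}*)-moves and the final layered triangulation of the innermost solid torus inside each $H_i$.

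Next I would match each step to a C-move as follows. An (\^{s}1)-move inserts a triangle whose third edge is new; the corresponding C-move has its arc $\ell$ running along the two $1$-handles dual to the other edges of the triangle and across the lake at their common vertex, the new $1$-handle being the thickening of the new edge and the accompanying $2$-handle the thickening of the inserted triangle. An (\^{s}2)-move inserts a tetrahedron with one new edge and two new faces; a single C-move suffices, producing the new $1$-handle together with one of the two new $2$-handles (the other $2$-handle is not needed, since $\tau$-maximality constrains only the set of $1$-handles). An (\^{s}3)-move, as well as the base one-tetrahedron triangulation of the innermost solid torus, introduce no new edges of $\tau$ (a short Euler characteristic count shows that the one-tetrahedron triangulation of a solid torus with two-triangle boundary has no interior edges), hence no C-move is performed for either.

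These C-moves are carried out in the same order as the (\^{s}*)-moves and the ensuing tetrahedral layerings. The key inductive invariant is that after the first $k$ such steps, the current lateral torus of the enlarged protoMom-structure, together with its induced triangulation, agrees with the component of $\partial U(T_i\cup\gamma_1\cup\cdots\cup\gamma_k)$ lying inside $H_i$, in accordance with the observation preceding Lemma~\ref{trn:tor:2:tor:lem}. The main technical obstacle is verifying that at each step the arc $\ell$ of the next C-move genuinely satisfies the required conditions (endpoints on islands, precisely two passes along $1$-handles, precisely one segment on a lake, disjoint from the $2$-handles); this follows from the bijective correspondence between the two edges of the current surface triangulation meeting at the relevant vertex and the strips dual to them on the current lateral torus, together with the fact that such a lake is a single open disc by the fullness hypothesis on $\Delta$.
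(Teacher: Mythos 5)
Your proposal follows essentially the same route as the paper's proof: both track the construction of $\tau$ step by step, associating to each (\^{s}1)- and (\^{s}2)-move a single C-move that inserts the thickening of the new edge together with (exactly one of) the new triangle(s), assigning nothing to the (\^{s}3)-moves and the final one-tetrahedron filling, and concluding $\tau$-maximality from the fact that the resulting structure carries a $1$-handle for every edge of $\tau$. Your added justifications (the Euler-characteristic count showing no interior edges arise, and the verification that the arcs $\ell$ satisfy the C-move conditions) are correct refinements of details the paper leaves implicit.
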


\begin{proof}
Recall from the proof of Theorem~\ref{int:protoMom:2:trn:thm} that
$\tau$ can be explicitly constructed by compressing each handle of
$\Delta$ on its core and performing a sequence of moves (\^{s}1),
(\^{s}2), (\^{s}3) described in Section~\ref{mom:2:trn:subsec}
(whose definition would be adjusted in a straightforward manner to
account for the fact that we do not compress lakes and the triangles
and tetrahedra that we insert are actually truncated). Indeed,
recall that the move (\^{s}1$'$) is actually a composition of
(\^{s}1) and (\^{s}2) and that the layered triangulation inserted at
the conclusion can be obtained by a sequence of (\^{s}2)-moves
followed by filling in the one-tetrahedron triangulation of solid
torus.

Let us fix such a sequence of (\^{s}$\star$)-moves. To each move we
now associate either a C-move or an identity (empty) move on the
protoMom-structure $(M,\Sigma,\Delta)$. More precisely, we do the
following:
\begin{itemize}
  \item to each (\^{s}1)-move we associate the C-move that consists
  in insertion of the 2-handle and the 1-handle that are thickenings
  of respectively the triangle and the edge inserted by the
  (\^{s}1)-move under consideration;
  \item to each (\^{s}2)-move we associate the
  C-move that consists  in insertion of a precisely one 2-handle
  and one 1-handle where the 1-handle is the thickening of the edge
  inserted during the (\^{s}2)-move under consideration and the 2-handle
  is the thickening of precisely one of the two triangles inserted
  by the move (this association is therefore non-unique);
  \item no move is associated to any of the (\^{s}3)-moves.
\end{itemize}
(Note that we do not associate anything to the final operation of
filling in with the one-tetrahedron triangulation). Then the
sequence of (\^{s}$\star$)-moves fixed above yields a well-defined
sequence of C-moves on $(M,\Sigma,\Delta)$. Applying this sequence
of moves to $(M,\Sigma,\Delta)$ gives a new weak internal
protoMom-structure $(M',\Sigma',\Delta')$ on $N$. Moreover,
$(M',\Sigma',\Delta')$ is also $\tau$-induced and the set of its
1-handles coincides with the set of thickenings of all the edges of
$\tau$. It now follows from the Euler characteristic argument that
$(M',\Sigma',\Delta')$ is $\tau$-maximal, whence the conclusion.
\end{proof}

\paragraph{Proof of Theorem~\ref{main:thm}} By Proposition~\ref{max:same:trn:related:prop} and
Lemma~\ref{proto:2:max:lem} the admissible M- and C-moves are
sufficient to relate to each other any two full weak
protoMom-structures induced by the same ideal triangulation $\tau$
of $N$. Recall now \cite{Ma88,Pierga} that two ideal triangulations
$\tau_1$ and $\tau_2$ of $N$ are related to each other by a sequence
of instances of the $(2\rightarrow 3)$-move shown in
Fig.~\ref{2:to:3:move:fig} and of inverses of this move.
    \begin{figure}
    \begin{center}
    \includegraphics[scale=0.5]{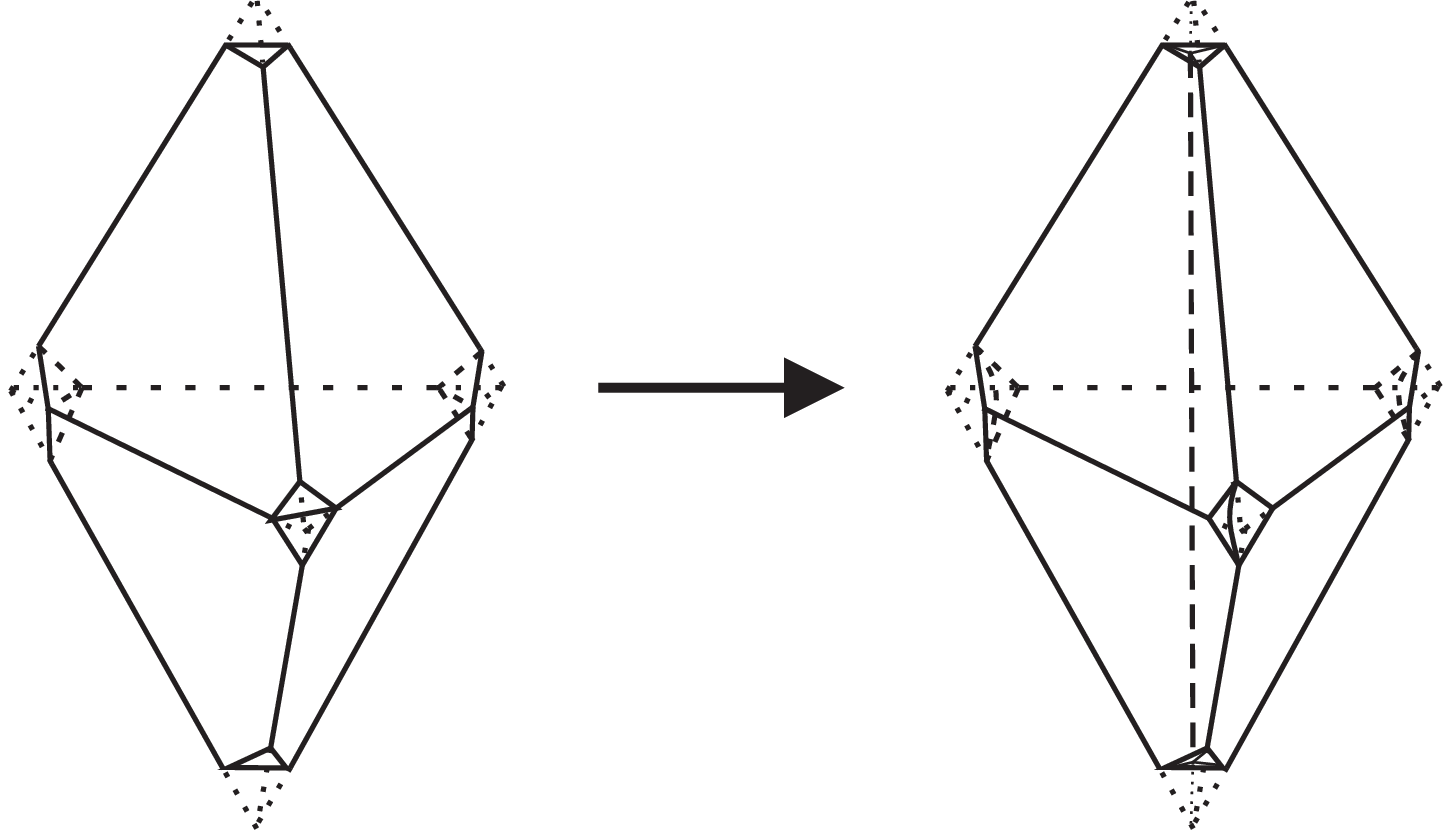}
    \mycap{The $(2\rightarrow 3)$-move on a truncated triangulation.}
    \label{2:to:3:move:fig}
    \end{center}
    \end{figure}
(This would be false if $\tau_1$ or $\tau_2$ were to consist of only
one tetrahedron, but this is impossible for an orientable $N$
without spherical boundary components.)

It is then sufficient to show that if $\tau_2$ is obtained from
$\tau_1$ by one $(2\rightarrow 3)$-move then there exist a
$\tau_1$-maximal weak protoMom-structure and a $\tau_2$-maximal weak
protoMom-structure that are related by M- and C-moves. Indeed, let
$\alpha$ be the $2$-handle that thickens the triangle destroyed by
the $(2\rightarrow 3)$-move, and let $(M_1',\Sigma,\Delta_1')$ be
any $\tau_1$-maximal protoMom-structure not containing $\alpha$; its
existence is obvious. Now we note that $(M_1',\Sigma,\Delta_1')$ can
actually be viewed as a $\tau_2$-induced protoMom-structure, and
there exists a C-move transforming it into a $\tau_2$-maximal
structure $(M_2',\Sigma,\Delta_2')$; this move consists in inserting
the edge and exactly one of the three triangles that appear during
the $(2\rightarrow 3)$-move and then thickening them. The proof is
whence complete.~\finedimo

\begin{rem}
\emph{By Theorem~\ref{main:thm} two genuine (non-weak)
Mom-structures internal on the same cusped hyperbolic manifold are
related by a sequence of admissible M- and C-moves. However, even
starting with two Mom-structures for which there exists a
triangulation with respect to which both structures are maximal,
along the sequence one might very well encounter non-genuine (weak
in our sense) Mom-structures. Therefore the question of finding
combinatorial moves relating to each other any two genuine
Mom-structures remains open.}
\end{rem}

\vspace{1cm}

\noindent Dipartimento di Matematica\\
Largo Bruno Pontecorvo 5\\
56127 PISA -- Italy\\
{\tt pervova@guest.dma.unipi.it}\\


\begin{thebibliography}{99}
\bibitem{DebloisShalen}
\textsc{J.~Deblois --- P.~B.~Shalen}, \emph{Volume and topology of
bounded and closed hyperbolic $3$-manifolds}, preprint {\tt
math.GT/0902.0576}.

\bibitem{EpPe}
\textsc{D.~B.~A.~Epstein -- R.~C.~Penner}, \emph{Euclidean
decomposition of non-compact hyperbolic manifolds}, J. Differential
Geom. (1) \textbf{27} (1988), 67-80.

\bibitem{www:CP}
\textsc{R.~Frigerio -- B.~Martelli -- C.~Petronio},
\emph{``Hyperbolic $3$-manifolds with non-empty boundary,''} Tables
available from {\tt www.dm.unipi.it/pages/petronio/public$_-$html}.

\bibitem{Mom1}
\textsc{D.~Gabai --- R.~Meyerhoff --- P.~Milley},  \textit{Mom
technology and volumes of hyperbolic $3$-manifolds}, preprint {\tt
math.GT/0606072}.

\bibitem{Mom2}
\textsc{D.~Gabai --- R.~Meyerhoff --- P.~Milley}, \textit{Minimum
volume cusped hyperbolic three-manifolds}, J. Amer. Math. Soc. (4)
\textbf{22} (2009), 1157--1215.

\bibitem{Mom4}
\textsc{D.~Gabai --- R.~Meyerhoff --- P.~Milley},  \textit{Mom
technology and hyperbolic 3-manifolds}, to appear in the Proceedings
of the 2008 Ahlfors-Bers Colloquium (AMS Contemporary Mathematics
series).

\bibitem{layered}
\textsc{W.~Jaco --- H.~Rubinstein --- S.~Tillman}, \textit{Minimal
triangulations for an infinite family of lens spaces}, preprint {\tt
math.GT/0805.2425}.

\bibitem{koji1}
\textsc{S.~Kojima}, \emph{Polyhedral decomposition of hyperbolic
manifolds with boundary}, Proc. Work. Pure Math. \textbf{10} (1990),
37-57.

\bibitem{koji2}
\textsc{S.~Kojima}, \emph{Polyhedral decomposition of hyperbolic
$3$-manifolds with totally geodesic boundary}, In: ``Aspects of
Low-Dimensional Manifolds,'' Adv. Stud. Pure Math. Vol. 20,
Kinokuniya, Tokyo, 1992, 93-112.

\bibitem{KoMi}
\textsc{S.~Kojima --- Y.~Miyamoto}, \emph{The smallest hyperbolic
$3$-manifolds with totally geodesic boundary}, J. Differential Geom.
(1) \textbf{34} (1991), 175-192.

\bibitem{Ma88}
\textsc{S.~Matveev}, \emph{Transformations of special spines and the
Zeeman conjecture}, Math. USSR-Izv. \textbf{31} (1988), 423-434.

\bibitem{Ma}
\textsc{S.~Matveev}, \emph{Algorithmic topology and classification
of $3$-manifolds}, Springer-Verlag, 2003.

\bibitem{Mom3}
\textsc{P.~Milley}, \textit{Minimum volume hyperbolic 3-manifolds},
J. Topol. (1) \textbf{2} (2009), 181--192.

\bibitem{Pierga}
\textsc{R.~Piergallini}, \textit{Standard moves for standard
polyhedra and spines}, Circ. Mat. Palermo \textbf{37}, suppl. 18
(1988), 391-414.

\end{thebibliography}
\end{document}